\documentclass[12pt,reqno]{amsart}
\usepackage{amsmath,amssymb,amsthm,mathtools,wasysym,calc,verbatim,enumitem,tikz,pgfplots,url,mathrsfs,fullpage,bbm}
\usepackage{bbold}
\usepackage{comment}

\newtheorem{mytheo}{Theorem}[section]
\newtheorem{mydef}[mytheo]{Definition}
\newtheorem{myrem}[mytheo]{Remark}
\newtheorem{mylem}[mytheo]{Lemma}
\newtheorem{mycoro}[mytheo]{Corollary}
\newtheorem{mypropo}[mytheo]{Proposition}

\theoremstyle{definition}

\theoremstyle{remark}

\usepackage[colorlinks = true,
            linkcolor  = blue,
            citecolor  = blue,
            urlcolor   = blue,
            bookmarks  = true]{hyperref}

\theoremstyle{remark}

\numberwithin{equation}{section}

\usepackage[backend=biber, style=numeric, citestyle=numeric-comp]{biblatex}
\DeclareFieldFormat{labelnumberwidth}{\mkbibbrackets{#1}}
\DeclareNameAlias{author}{family-given}

\DeclareFieldFormat*{title}{#1}

\AtEveryBibitem{%
  \clearfield{issn}
  \clearfield{url}
  \clearfield{eprint}
}

\renewbibmacro{in:}{}

\DeclareFieldFormat[article]{volume}{\textbf{#1}}
\DeclareFieldFormat[article]{number}{\textbf{#1}}
\renewbibmacro*{volume+number+eid}{%
  \printfield{volume}
  \iffieldundef{number}
    {}%
    {\printtext[parens]{\printfield{number}}}
  \setunit{\addcomma\space}%
  \printfield{eid}}

\DeclareFieldFormat[journal]{journaltitle}{\mkbibemph{#1}}

\DeclareNameAlias{author}{family-given}
\DeclareNameAlias{editor}{family-given}

\DeclareDelimFormat{namedelim}{\addcomma\space}
\DeclareFieldFormat[article,incollection,inproceedings,book]{giveninit}{\mkbibnamegiven{#1}}

\DeclareNameFormat{family-given}{%
  \usebibmacro{name:family-given}
    {\namepartfamily}
    {\namepartgiveni} 
    {\namepartprefix}
    {\namepartsuffix}%
  \usebibmacro{name:andothers}}

\DeclareFieldFormat{pages}{#1}

 \allowdisplaybreaks[4]

\begin{document}

\title{Rank deficiency of Bernoulli random matrices for growing corank}

\author{Zeyan Song}
\address{Shandong University, Jinan, 250100, China.}
\email{zeyansong8@gmail.com}

\author{Hanchao Wang }
\address{Shandong University, Jinan, 250100, China.}
\email{wanghanchao@sdu.edu.cn}
\subjclass[2020]{15B52, 60B20}

\begin{abstract}
        Let $A$ be an $n \times n$ Bernoulli random matrix whose entries are i.i.d. Bernoulli($p$) random variables with $p \le 1/2$. In this paper, we determine the probability that the corank of $A$ is at least $k$ when $k$ is of order $o(\sqrt{\log n})$
        \begin{align*}
            \mathbb{P}\left( \text{corank}A \ge k \right) = \left(1-p+o_{n}(1) \right)^{kn}.
        \end{align*} 

\end{abstract}

\maketitle

\section{Introduction}\label{Introduction}
 Let $A$ be a uniformly distributed random matrix on $\{0,1\}^{n \times n}$. The study of the probability that $A$ has a large corank is a central topic in random matrix theory. In particular, the probability that the corank is at least one, namely, the probability that the matrix is singular, has been extensively studied. In the 1960s, Koml\'{o}s proved that $\mathbb{P}(\det(A)=0) = o_n(1)$. Consider that the probability of $A$ has a zero column, the natural conjecture is as follows. 
 \begin{align}\label{Singularity}
     \mathbb{P}\left( \det{A} = 0 \right) = \left( \frac{1}{2}+o_{n}(1) \right)^{n}.
 \end{align}

Much later, the first exponential upper bound $\mathbb{P}(\det(A)=0) \le 0.998^{n} $ was obtained by Kahn, Koml\'{o}s and Szemer\'{e}di in \cite{KKS_jams}. Through the following observation, 
\begin{align*}
    \mathbb{P}\left( \det{A}=0 \right)=\mathbb{P}\left( \exists x \in \mathbb{R}^{n}\setminus\{ 0\} :\Vert Ax \Vert_{2}=0   \right),
\end{align*}
where $\Vert \cdot\Vert_{2}$ is the Euclidean norm in $\mathbb{R}^{n}$, we can reduce the problem of invertibility of a random matrix to estimating the probability that the inner product between a random vector and a given deterministic vector equals zero. In particular, letting $a:=(a_{1},\dots,a_{n}) \in \mathbb{R}^{n}$ and $X:=(X_{1},\dots,X_{n})$ be a uniformly distributed random vector on $\{0,1\}^{n}$, we consider the upper bound of
\begin{align}\label{Littlewood-Offord problem}
    \mathbb{P}\left(  a_{1}X_{1}+\dots+a_{n}X_{n}=0 \right).
\end{align}
Problems of this type are now commonly known as Littlewood-Offord problems. Through a detailed investigation of these problems, Tao and Vu \cite{TV_rsa, TV_jams} improved the exponential upper bound to $(3/4+o_{n}(1))^{n}$. In 2010, Bourgain, Vu, and Wood further improved the upper bound to $\left(1/\sqrt{2} + o_n(1)\right)^n$ in \cite{BVW_jfa}.

Meanwhile, the study of the smallest singular value $\sigma_{n}(A)$ is of fundamental importance. Since a matrix is singular precisely when its smallest singular value vanishes, the invertibility problem is closely connected to estimating $\sigma_{n}(A)$. In fact, the geometric method introduced by Rudelson and Vershynin \cite{RV_adv} shows that these two questions are, to a large extent, equivalent. They established that
\begin{align*}
    \mathbb{P}\left( \sigma_{n}(A) \le \varepsilon n^{-1/2} \right) \le C\varepsilon +e^{-cn},
\end{align*}
where $C$ and $c$ are absolute constants. In addition, a significant contribution of their work was to describe the relationship between the Littlewood-Offord problem and the arithmetic structure of the vector $a$ in terms of its least common denominator (LCD). More precisely, they demonstrated that there are only a few vectors $a$ for which the upper bound in \eqref{Littlewood-Offord problem} is large. This viewpoint may be regarded as an alternative formulation of the Littlewood-Offord problem: given a prescribed level of small ball probability, one seeks to understand the structural properties that $a$ must satisfy or to estimate the number of such vectors. 

This viewpoint is commonly referred to as the inverse Littlewood–Offord problem. For further developments of this theory and its applications to random matrix theory, see, for example \cite{TV_aom,Nguyen_duke}. 

Following the strategy of first fixing the magnitude of the probability and then estimating the number of vectors, Tikhomirov \cite{Tikhomirov} ultimately completed the proof of \eqref{Singularity}. More specifically, he introduced a new method, referred to as the ``inversion of randomness", through which he was able to show that the number of lattice vectors satisfying a certain arithmetic structural property (namely, those for which the probability exceeds a given threshold) is super-exponentially small. 

We now return to the problem of estimating the probability that $A$ has
large corank.  Recall that the event
\begin{align*}
    \mathrm{rank}A\le n-k
\end{align*}
is equivalent to the assertion that the columns of $A$ are contained in
some subspace of dimension at most $n-k$. Thus, this problem may be viewed
as a large-deviation version of the singularity problem.  In their seminal
work on the singularity probability, Kahn, Komlós and Szemerédi
\cite{KKS_jams} already obtained a superexponential estimate of the
form
\begin{align*}
    \mathbb{P}\left( \mathrm{rank}A \le n-k \right) \le o_k(1)^n .
\end{align*}

On the other hand, the elementary event that $k+1$ prescribed rows of $A$
are identical has probability $2^{-kn}$ and already forces
$\mathrm{rank}A\le n-k$.  This suggests that the correct exponential
rate should be $2^{-kn}$.  In this direction, Vu \cite{Vu_crmt} formulated
the conjecture
\begin{align}\label{rank}
    \mathbb{P}\left( \mathrm{rank}A \le n-k  \right)
    = \left( \frac{1}{2}+o(1) \right)^{kn}.
\end{align}

When $k$ is fixed, this conjectural exponential rate was obtained by
Jain, Sah and Sawhney \cite{Jain_ecp}, combining their argument with
Tikhomirov's method for the singularity problem.  Their result identifies the
sharp rate
\begin{align*}
    \mathbb{P}\left( \mathrm{corank} A \ge k \right)
    =2^{-(k+o(1))n},
\end{align*}
for every fixed $k$.  However, the methods in this fixed-corank regime are
not uniform enough to yield the conjectural estimate when $k$ is allowed to
grow with $n$.  In the growing-corank regime, the best general estimates
have therefore been of the weaker form
\begin{align}\label{rank-with-c}
    \mathbb{P}\left( \mathrm{rank}A \le n-k  \right)
    \le e^{-ckn},
\end{align}
for some absolute constant $c>0$.

The first result of this type was due to Rudelson \cite{Rudelson_aop}, who
proved \eqref{rank-with-c} for random matrices with independent identically
distributed non-constant subgaussian entries in the range
$k\le c\sqrt n$.  Rudelson's proof belongs to the geometric approach to
invertibility and combines distance estimate ideas.

More recently, Hunter, Kwan, Sauermann and Sawhney
\cite{HKSS_arxiv} removed the restriction $k\le \sqrt n$ for random sign matrix and proved \eqref{rank-with-c} for the full range $1\le k\le n$.  A key feature of their proof is that it returns to the earlier \cite{KKS_jams} strategy rather than the geometric framework.  Their main new input is a high-dimensional relative anticoncentration inequality. This line of work has also led to extensions beyond the Bernoulli model.
Polavarapu \cite{Polavarapu_arxiv} adapted the framework in \cite{HKSS_arxiv} to independent, not necessarily identically distributed, real entries that satisfy a uniform atom bound
\begin{align*}
    \sup_{x\in\mathbb R}\mathbb P(A_{ij}=x)\le b<1.
\end{align*}
Using a Bernoulli decomposition of the entries and reorganizing the thin/thick
argument in a genuinely inhomogeneous setting, this gives
\begin{align*}
    \mathbb{P}\left( \mathrm{rank}A\le n-k \right)\le \exp(-c_b kn),
    \qquad 1\le k\le n,
\end{align*}
with a constant $c_b>0$ depending only on $b$. 

Thus, exponential upper bounds of the form $\exp(-\Omega(kn))$ are now available under broad assumptions and, in the Bernoulli setting, throughout the full range $1 \le k \le n$. The objective of the present paper is different. Rather than seeking an estimate that is uniform over the widest possible range of $k$, our objective is to determine the sharp exponential asymptotics of the large corank probability in a regime where $k$ is allowed to grow with $n$. It also main result in this paper.

In this paper, we investigate the precise probability that the matrix has rank deficiency $k$ in the regime where $k$ grows with $n$. More specifically, we prove that \eqref{rank} holds for $k=o(\sqrt{\log{n}})$.

\begin{mytheo}\label{Theorem A}
   For every $p \in (0,1/2]$ and $\varepsilon > 0$, there are $n_{\ref{Theorem A}}$ and $c_{\ref{Theorem A}} >0$ depending only on $p$ and $\varepsilon$ with the following property. Let $n \ge n_{\ref{Theorem A}}$, $1 \le k \le c_{\ref{Theorem A}}\sqrt{\log{n}}$, and let $B_{n}(p)$ be $n \times n$ random matrix with independent entries $b_{ij}$ such that $\mathbb{P}(b_{ij}=0)=1-p$ and $\mathbb{P}(b_{ij}=1)=p$. Then we have 
   \begin{align*}
       \mathbb{P}\left(  \mathrm{rank}\left(B_{n}(p) \right) \le n-k   \right) \le \left( 1-p+\varepsilon \right)^{kn}.
   \end{align*}
\end{mytheo}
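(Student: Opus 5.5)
We would follow the strategy of Jain, Sah and Sawhney \cite{Jain_ecp}, built on Tikhomirov's inversion of randomness \cite{Tikhomirov}, while tracking every dependence on $k$ so that the losses are at most $e^{o(kn)}$ whenever $k\le c\sqrt{\log n}$.

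\textbf{Reduction to a kernel vector.} If $\mathrm{rank}\,B_n(p)\le n-k$, then the kernel of $B_n(p)^{T}$ has dimension at least $k$. We first treat the case where this kernel contains a nonzero vector supported on few coordinates (compressible kernel), and then the case where every kernel vector is spread (incompressible kernel).

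\textbf{Compressible kernels.} Zero rows, repeated rows, and more generally sparse linear relations among rows fall here. We would handle them by a direct union bound. Choose $k$ rows that are determined by the remaining rows; the probability is at most $\binom{n}{k}^{O(1)}(1-p)^{kn}$ up to lower-order factors. The general statement is that the probability that $k$ independent rows all lie in a fixed $(n-k)$-codimension-type structure with sparse support is at most $(1-p+o(1))^{kn}$, since $\max_x\mathbb P(b_{ij}=x)=1-p$. For sparse kernel vectors (support size $\le \delta n$) we would reproduce the standard $\varepsilon$-net and Hal\'asz-type argument, which gives $(1-p+\varepsilon)^{n}$ for each of $k$ nested steps.

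\textbf{Incompressible kernels and the peeling step.} We use a row-exposure argument. Let $E_j$ be the event that the span of the first $n-j$ rows has codimension at least $k-j$ in a suitable sense. We would show that, conditioned on the first rows spanning a subspace $V$ of dimension $d$ whose orthogonal complement contains no structured (non-spread, small-LCD) vector, each new row lands in $V$ with probability at most $(1-p+\varepsilon)^{k}$ per relevant dimension. This is the Jain--Sah--Sawhney high-dimensional Littlewood--Offord step: for a $k$-dimensional subspace $W=V^{\perp}$ with all unit vectors unstructured, $\mathbb P(XP_W=0)\le(1-p+\varepsilon)^{k\cdot(\text{something})}$. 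Summing over which $k$ rows are added last costs $\binom nk=e^{O(k\log n)}$, which is $e^{o(kn)}$.

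\textbf{Main obstacle: excluding structured subspaces.} The hard part is showing that, with probability $1-(1-p+\varepsilon)^{kn}$, the kernel contains no $k$-dimensional subspace all of whose vectors (or a large subset of them) have small LCD. This is where inversion of randomness enters. We would cover the structured $k$-tuples of kernel vectors by lattice nets. Tikhomirov's counting says that, for one vector, the number of lattice points with threshold probability $\rho$ is at most $(\varepsilon\rho)^{-1}$ times a super-exponentially small fraction. For $k$ vectors jointly, we would work with the Grassmannian and tensorize over $k$ coordinates, counting $k\times n$ integer matrices with large joint atom probability. The net size grows like $(Cn)^{k^2}$-type factors, coming from discretizing a $k$-dimensional frame to precision $n^{-O(k)}$, together with losses $e^{Ck^2\log\cdot}$ in the multidimensional Esseen inequality. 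Requiring these to be $e^{o(kn)}$, or more precisely that the accumulated error $\varepsilon$ survives when raised to $kn$ against a $\exp(Ck^2\log n)$-scale conditioning loss, is exactly what forces $k^2\lesssim c\log n$. So the key technical lemma we would aim for is a $k$-dimensional version of Tikhomirov's counting theorem whose constants are $e^{O(k^2)}$. Proving that uniformly is where we expect most of the work to lie.
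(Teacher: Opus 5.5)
There is a genuine gap. Your incompressible step rests on two things you do not prove. The first is a $k$-dimensional version of Tikhomirov's counting theorem: an $e^{-Mkn}$ fraction bound for $k\times n$ integer frames with large joint atom probability, with constants $e^{O(k^2)}$. The second is a high-dimensional Littlewood--Offord bound that is uniform in growing $k$. The first is exactly what no available method gives; an LCD-based treatment of $k$ orthogonal incompressible vectors yields only exponential counting, not $e^{-Mkn}$. Your explanation of the threshold also fails. Losses of size $e^{Ck^2\log n}$ are $e^{o(kn)}$ for every $k=o(n/\log n)$, so they cannot be what forces $k^2\lesssim\log n$.

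The missing idea is that no multidimensional structure theory for spread vectors is needed. Choose $k$ columns lying in the span of the other $n-k$ (cost $\binom{n}{k}=e^{o(kn)}$). Let $B$ be the $(n-k)\times n$ matrix whose rows are those other columns. Then $\dim\ker B\ge k$, the chosen columns $A_1,\dots,A_k$ are independent of $B$, and each is orthogonal to all of $\ker B$. If $\ker B$ contains just one incompressible unit vector $v(B)$ with $\mathcal{T}_p(v(B),L)\le(1-p+\varepsilon)^n$, independence gives $\mathbb{P}(\langle A_i,v(B)\rangle=0\ \forall i\le k\mid B)\le\bigl(L(1-p+\varepsilon)^n\bigr)^k$. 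The same remark defuses your peeling step: a row lands in $V$ with probability at least $\mathbb{P}(X=0)=(1-p)^n$, so nothing beyond one unstructured direction in $V^{\perp}$ can help. What remains is the \emph{one-dimensional} inversion of randomness, but with exceptional probability $e^{-Mkn}$ instead of $e^{-Mn}$. This makes the event ``some incompressible kernel vector is structured'' negligible against $(1-p)^{kn}$. Run Tikhomirov's argument with $\gamma\asymp 1/k$, under the constraint $k\lesssim\gamma\log n$ forced by requiring the Lipschitz parameter $\eta\approx e^{-Mk/\gamma}$ to exceed $n^{-1/2}$. That combination is what produces $k\le c\sqrt{\log n}$.

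The compressible side is also not in place. Your split (the kernel contains one sparse vector, versus every kernel vector is spread) is the wrong one. A single compressible kernel vector costs only about $(1-p)^n$ (a zero column), so that case cannot be bounded at rate $(1-p+\varepsilon)^{kn}$ by itself. All kernel vectors are annihilated by the same $B$, so there is no fresh randomness for ``$k$ nested'' one-dimensional steps. The right dichotomy is whether $\ker B$ contains an orthonormal $k$-frame made entirely of compressible vectors; if not, the incompressible vector needed above exists.

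Bounding that event by $(1-p+\varepsilon)^{kn}$ requires a per-row joint small-ball bound $\mathbb{P}(\Vert V^{\top}b\Vert_2\le\delta\sqrt{k})\le(1-p+\varepsilon)^{k}$ for $\tfrac12$-almost orthogonal $n\times k$ matrices $V$. The fact that $\max_x\mathbb{P}(b_{ij}=x)=1-p$ does not give this. Hal\'asz- or Rudelson--Vershynin-type projection bounds lose a factor $C^{k}$ per row, i.e.\ they give $(C(1-p))^{kn}$, which is useless here. The paper removes the constant with a tensor-power trick. The $\mu$-th power of this probability is bounded by the small-ball probability of $PX$, where $X=\sum_{j\le\mu}2^{j-1}b^{(j)}$ is built from independent copies and satisfies $\mathcal{L}(x_i,1/2)\lesssim(1-p)^{\mu}$. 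The loss therefore becomes $C^{1/\mu}\to1$. Combined with Rudelson's random-rounding net for compressible frames, of size $e^{o_\tau(1)kn}$, this yields $(1-p+\varepsilon)^{kn}$.
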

\begin{myrem}
Considering the event that $B_{n}(p)$ has $k$ zero columns, we obtain
    \begin{align*}
        \mathbb{P}\left(  \mathrm{corank}(B_{n}(p)) \ge k \right) = \left( 1-p + o_{n}(1)\right)^{kn}.
    \end{align*}
    In particular, when $p = 1/2$, conjecture \eqref{rank} holds for $k =o(\sqrt{\log{n}})$.
\end{myrem}
\subsection{Proof Strategy and Main Innovations}\label{Proof sketch}
We briefly describe the main ideas and innovations of the proof. We divide the argument into three main steps.

First, we make a simple observation. Note that if the corank of $ A$ is at least $k$, then there exist $k$ column vectors that are linearly dependent on the remaining $n-k$ column vectors. For example, if we take the first $k$ columns, then $ A_{1}, \ldots, A_{k}$ belong to the linear span of $A_{k+1}, \ldots, A_{n}$. Let $B$ be the $(n-k)\times n$ matrix whose rows are $A_{k+1}^{\top},\dots,A_n^{\top}$. Then we obtain the following estimate
\begin{align*}
    \mathbb{P}\left( \text{corank}A \ge k \right)\le \binom{n}{k}  \mathbb{P}\left( \forall v \in \ker{B}: \left\langle  A_{i},v \right\rangle=0 \text{ for all } i \in [k]\right).
\end{align*}

Therefore, it suffices to analyze the properties of the vector $v$. Based on the classical decomposition of the unit sphere introduced by Rudelson and Vershynin \cite{RV_adv}, we split the argument into two parts: the compressible (Comp) vectors and the incompressible (Incomp) vectors.

Since $\dim(\ker B) \ge k$, our second step is to show that the probability that $k$ orthogonal vectors are all compressible is $(1-p+o(1))^{kn}$. In particular, suppose that $A$ has $k$ identically zero rows. Then $\ker B$ contains $k$ standard basis vectors, all of which belong to $\mathrm{Comp}$. Consequently, the contribution of the leading-order to probability essentially comes from the Compressible vectors. Consequently, we deduce that there must exist at least one incompressible vector in $\ker B$, that is, among any $k$ orthogonal vectors, at least one is incompressible. Then, by invoking the inversion of randomness technique developed by Tikhomirov \cite{Tikhomirov}, we are able to complete the proof. In fact, for both the Comp and Incomp parts, we introduce several new methods that refine the existing approach.

For the compressible part, we obtain essentially the best possible result. In fact, we derive a sharp upper bound for the probability of $k=o(n)$. Consequently, this method is applicable to establish \eqref{rank} for a much wider range of $k$.

To achieve a precise probability estimate, the standard Hanson-Wright type inequalities are no longer sufficient. Instead, we construct a new random vector to replace the Bernoulli vector and establish a small ball probability bound for this newly constructed vector. This, in turn, allows us to deduce the desired small ball probability estimate for the original Bernoulli random vector. Indeed, this constitutes the crucial innovation that allows us to push the range of $k$ beyond fixed constants and permit it to increase with $n$. The remaining upper bound on $k$ arises entirely from the analysis of the incompressible component.

For the incompressible part, we rely on Tikhomirov’s inversion of randomness method in \cite{Tikhomirov}. We refine his argument to its optimal form in our setting. In fact, if one only works with a single incompressible vector and does not consider higher-dimensional vector systems, the restriction $k \le \sqrt{\log n}$ turns out to be necessary.

If instead one attempts to work with higher-dimensional systems of vectors, one is naturally led to a high-dimensional Littlewood-Offord problem. However, at present there is no available method that yields a sufficiently strong estimate for this setting; in particular, there is no known way to prove that the number of $k$ orthogonal incompressible unit vectors with comparatively large small ball probability is super-exponentially small of order $e^{-Mkn}$.

Indeed, if one characterizes the high-dimensional Littlewood-Offord problem via the least common denominator (LCD) without imposing additional structural assumptions, the best possible bound on the number of such vectors is only exponential of order $ e^{-cn}$. At the same time, to obtain the precise probability, the approaches developed in \cite{Rudelson_aop} or \cite{HKSS_arxiv} break down immediately.

\textbf{Organization of this paper}
The paper is organized as follows. In Section \ref{Notation}, we introduce the notation and collect several basic definitions that will be used throughout the paper. Section \ref{Preliminaries} contains preliminary results. In Section \ref{Compressible vectors}, we analyze the compressible component. There we establish essentially optimal probability bounds valid for arbitrary $k$, which already yield sharp estimates in this regime. Section \ref{Proof of main theorem} is devoted to the proof of the main theorem. In particular, we treat the incompressible component through a refinement of Tikhomirov’s inversion of randomness method, which leads to the restriction $k \le c\sqrt{\log{n}}$. Combining compressible and incompressible analyzes, we complete the proof of Theorem \ref{Theorem A}. The detailed proof for the incompressible part will be deferred to the appendix.

\section{Notation}\label{Notation}
We denote by $[n]$ the set of natural numbers from $1$ to $n$. Given a vector $x\in \mathbb{R}^{n}$, we denote by $\Vert x\Vert_{2}$ its standard Euclidean norm: $\Vert x\Vert_{2}=\left(\sum_{j\in [n]}{x_{j}^{2}} \right)^{\frac{1}{2}}$, and the supnorm is denoted $\Vert x\Vert_{\infty}=\max_{i}{|x_{i}|}$. The unit sphere of $\mathbb{R}^{n}$ is denoted by $S^{n-1}$. The cardinality of a finite set $\mathrm{I}$ is denoted by $\left| \mathrm{I} \right|$.
\par 
If $V$ is a $m\times l$ matrix, we denote by $\mathrm{Row}_{i}\left(V\right)$ its $i$-th row and $\mathrm{Col}_{j}\left(V\right)$ its $j$-th column. Its singular values will be denoted by 
$$s_{1}\left(V\right) \ge s_{2}\left(V\right) \ge \cdots \ge s_{m}\left(V\right) \ge 0.$$
The Euclidean operator norm of $V$ is defined as 
$$\Vert V\Vert =\max_{x\in S^{n-1}}\Vert Vx\Vert_{2},$$
and the Hilbert-Schmidt norm as 
\begin{align*}
    \Vert V\Vert_{\mathrm{HS}}=\left(\sum_{i=1}^{m}\sum_{j=1}^{l}v_{i,j}^{2}\right)^{\frac{1}{2}}.
\end{align*}
Note that $\Vert V\Vert=s_{1}\left(V\right)$ and $\Vert V\Vert_{\mathrm{HS}}=\left(\sum_{j=1}^{m}s_{j}\left(V\right)^{2}\right)^{\frac{1}{2}}$.

We denote by $\mathcal{L}(X,t)$ the L\'{e}vy concentration function of a random vector $X \in \mathbb{R}^{m}$:
\begin{align*}
    \mathcal{L}(X,t)=\sup_{y \in \mathbb{R}^{m}}{\mathbb{P}\left( \Vert X-y \Vert_{2} \le t \right)}.
\end{align*}

Denote by $c$, $c',\dots$ the universal constants and by $c\left( u\right)$, $C\left( u\right)$ the constants depending only on $u$. Their value can change from line to line.

\section{Preliminaries}\label{Preliminaries}

We will need to estimate the number of integer points in a ball in $\mathbb{R}^{n}$. The set $B\left(0,R\right)$ is the ball of radius R centered at 0.

\begin{mylem} \label{Integer size in ball}
For any $R>0$,
\begin{align*}
    \left| \mathbb{Z}^{n} \cap RB_{2}^{n}\right| \le \left(2+\frac{C_{\ref{Integer size in ball}}R}{\sqrt{n}}\right)^{n},
\end{align*}
where $C_{\ref{Integer size in ball}}>0$ is an absolute constant.
\end{mylem}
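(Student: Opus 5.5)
The plan is a volume comparison. Place around each lattice point a small cube and compare the total volume of these disjoint cubes with the volume of a slightly enlarged ball. For each $z \in \mathbb{Z}^{n} \cap RB_{2}^{n}$ take the unit cube $Q_z = z + [-1/2,1/2]^{n}$. The cubes $Q_z$ have pairwise disjoint interiors and each has volume $1$. Every point of $Q_z$ lies within Euclidean distance $\sqrt{n}/2$ of $z$, so $Q_z \subset (R+\sqrt{n}/2)B_{2}^{n}$. Hence
\begin{align*}
    \left| \mathbb{Z}^{n} \cap RB_{2}^{n}\right| \le \mathrm{vol}\left( \left(R+\tfrac{\sqrt{n}}{2}\right)B_{2}^{n}\right) = \left(R+\tfrac{\sqrt{n}}{2}\right)^{n}\mathrm{vol}(B_{2}^{n}).
\end{align*}

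Next I bound the volume of the unit ball. We have $\mathrm{vol}(B_{2}^{n}) = \pi^{n/2}/\Gamma(n/2+1)$, and Stirling gives $\Gamma(n/2+1) \ge (n/(2e))^{n/2}$. Therefore $\mathrm{vol}(B_{2}^{n}) \le (2\pi e/n)^{n/2} = (C_0/\sqrt{n})^{n}$ with $C_0 = \sqrt{2\pi e}$. Substituting,
\begin{align*}
    \left| \mathbb{Z}^{n} \cap RB_{2}^{n}\right| \le \left(\frac{C_0 R}{\sqrt{n}} + \frac{C_0}{2}\right)^{n}.
\end{align*}
Since $C_0/2 = \sqrt{\pi e/2} \approx 2.07 > 2$, this falls just short of the stated form $(2 + CR/\sqrt{n})^{n}$.

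This small loss in the additive constant is the only real obstacle. To remove it, I would split into two regimes of $R$.

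If $R \ge \sqrt{n}$, the additive term is absorbed: $C_0 R/\sqrt{n} + C_0/2 \le (3C_0/2)R/\sqrt{n}$, so any $C \ge 3C_0/2$ works.

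If $R < \sqrt{n}$, I would bound the count combinatorially instead of by volume. An integer point with $\|z\|_2 \le R$ has $\sum_i z_i^2 \le R^2$. In particular it has at most $R^2$ nonzero coordinates, and $\sum |z_i| \le \sum z_i^2 \le R^2$. So the count is at most the number of ways to choose a support and signs and then distribute the $\ell_1$ mass. Concretely, the number of $z\in\mathbb{Z}^n$ with $\|z\|_1 \le m$ is at most $\sum_{j\le m}\binom{n}{j}2^j\binom{m}{j}$, and with $m = \lfloor R^2 \rfloor$ this is $(1+O(R/\sqrt{n}))^{n}$-type up to the required form. Alternatively, and more simply, use the generating-function bound
\begin{align*}
    \left| \mathbb{Z}^{n} \cap RB_{2}^{n}\right| \le e^{\lambda R^2}\Big(\sum_{j\in\mathbb{Z}} e^{-\lambda j^2}\Big)^{n}
\end{align*}
for any $\lambda>0$. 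Choosing $\lambda = n/R^2 \ge 1$ gives $\sum_j e^{-\lambda j^2} \le 1 + 3e^{-\lambda}$, so the bound is at most $e^{n}(1+3e^{-\lambda})^n$. This is not yet of the right form, because the factor $e^{n}$ is too large.

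So the better choice for the small-$R$ case is to take $\lambda$ of order $\log(\sqrt{n}/R)$ and check that it yields $(1 + CR/\sqrt{n})^{n} \le (2 + CR/\sqrt{n})^{n}$. Getting the bookkeeping of $\lambda$ right in this case is where I expect to spend effort. Since the paper likely cites this as a standard fact, the volume argument with $C$ chosen large handles all $R \ge c\sqrt{n}$. For smaller $R$, the ``$2$'' provides ample room: the crude bound $2^{n}$ already covers the regime where $C_0R/\sqrt{n} + C_0/2$ is close to $2$ only if the count is trivially small.
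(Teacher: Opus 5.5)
The paper states this lemma without proof, as a standard fact, so your argument has to stand on its own. As written, it leaves the regime $R<\sqrt n$ unproved. Your volume comparison and your case $R\ge\sqrt n$ are correct.

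The small-$R$ gap is genuine, not just bookkeeping. The volume bound is never smaller than $(C_0/2)^n=(\pi e/2)^{n/2}>2.06^n$. So deducing the lemma from it at $t=R/\sqrt n$ forces $C\ge C_0+(C_0/2-2)/t$, which is unbounded as $t\to 0$. For this regime you only assert that the combinatorial count is ``$(1+O(R/\sqrt n))^n$-type''. You then abandon the generating-function attempt after the choice $\lambda=n/R^2$ fails, and the closing paragraph (``the crude bound $2^n$ already covers\dots'') is not an argument.

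For the paper's actual use this does not matter. In the proof of Proposition \ref{Compressible subspace} the lemma is applied in dimension $d=4\tau^2 n$ with radius $R=\frac{3\sqrt n}{2\tau}$, so $R/\sqrt d=\frac{3}{4\tau^2}$, which is large for small $\tau$, and your completed case suffices. The lemma as stated, however, is for every $R>0$.

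Your combinatorial setup closes the gap in a few lines. If $R<1$ the count is $1$. For $1\le R<\sqrt n$ put $m=\lfloor R^2\rfloor$ and $t=R/\sqrt n$. Using $\binom{m}{j}\le 2^m$ and $(m/n)^{j-m}\ge 1$ for $j\le m$,
\begin{align*}
\sum_{j\le m}\binom{n}{j}2^j\binom{m}{j}\le 2^m\Big(\frac{n}{m}\Big)^m\Big(1+\frac{2m}{n}\Big)^n\le\Big(\frac{2e^2n}{m}\Big)^m\le\exp\big(nt^2\log(2e^2/t^2)\big).
\end{align*}
The last step uses that $m\mapsto m\log(2e^2n/m)$ is increasing for $m\le 2en$, together with $m\le R^2=t^2n$. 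Since $t^2\log(2e^2)\le 2.7\,t$ and $2t^2\log(1/t)\le 2t/e$, the exponent is at most $4nt$. By convexity, $e^{4t}\le 1+(e^4-1)t$ on $[0,1]$. So the count is at most $(1+e^4R/\sqrt n)^n$. Combined with your large-$R$ case, the lemma holds with $C=\max\{e^4,\tfrac32\sqrt{2\pi e}\}=e^4$.

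Alternatively, your generating-function bound works if you take $e^{-\lambda}=t\le 1/2$. It gives $\exp(nt^2\log(1/t))(1+4t)^n\le e^{5nt}$, and the volume bound covers $t\ge 1/2$ after enlarging $C$.
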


We will introduce auxiliary result concerning random variables.

\begin{mylem}\label{Tenzorization}Let $X_{1},\dots,X_{n}$ be independent non-negative random variables
\begin{itemize}
    \item Assume that there exist $\eta >0$ and $ \tau >0$ such that $\mathbb{P}( X_{j} \le \eta) \le \tau$. Then for all $\varepsilon \in (0,1]$,
    \begin{align*}
        \mathbb{P}\left( \sum_{j=1}^{n} X_{j} \le \eta \varepsilon n \right) \le \left( \frac{e}{\varepsilon} \right)^{\varepsilon n} \tau^{n-\varepsilon n}.
    \end{align*}
\item  Assume that there exist $M$, $m> 0$ and $s_{0} \ge 0$ such that $\mathbb{P}\left(X_{j}\le s\right)\le \left(Ms \right)^{m}$ for all $s\ge s_{0}$. Then
\begin{align*}
\mathbb{P}\left(\sum_{j=1}^{n}X_{j} \le n s\right)\le \left(C_{\ref{Tenzorization}}M s\right)^{mn}\quad ~\text{for all}~~ s\ge s_{0},
\end{align*}
where $C_{\ref{Tenzorization}}$ is a constant.
\end{itemize}

\end{mylem}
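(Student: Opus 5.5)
Both parts of Lemma \ref{Tenzorization} are standard tensorization estimates, and I would prove each by reducing to a counting or moment argument over the coordinates.

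\textbf{First part.} Suppose $\sum_{j} X_j \le \eta\varepsilon n$. Since each $X_j\ge 0$, at most $\varepsilon n$ indices can satisfy $X_j>\eta$; otherwise the sum would exceed $\eta\varepsilon n$. Hence there is a set $J\subset[n]$ with $|J|\ge n-\lfloor \varepsilon n\rfloor$ on which $X_j\le\eta$ for all $j\in J$.

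For a fixed $J$ of this size, independence gives probability at most $\tau^{|J|}\le \tau^{n-\varepsilon n}$, using $\tau\le 1$; if $\tau>1$ the bound is trivial. A union bound over the possible complements $J^c$, of size at most $\varepsilon n$, costs
\begin{align*}
\sum_{i\le \varepsilon n}\binom{n}{i}\le \left(\frac{e}{\varepsilon}\right)^{\varepsilon n}.
\end{align*}
This is the standard binomial-sum estimate. Its only subtlety is the integer part of $\varepsilon n$, which is harmless since $\varepsilon\le 1$. This proves the first part.

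\textbf{Second part.} I would use the exponential moment (Laplace transform) method. By Markov's inequality, for $\lambda>0$,
\begin{align*}
\mathbb{P}\Big(\sum_j X_j\le ns\Big)\le e^{\lambda n s}\prod_{j}\mathbb{E}e^{-\lambda X_j}.
\end{align*}
Each factor is bounded via the layer-cake formula:
\begin{align*}
\mathbb{E}e^{-\lambda X}=\int_0^\infty \lambda e^{-\lambda u}\,\mathbb{P}(X\le u)\,du .
\end{align*}
Choose $\lambda=1/s$ and split the integral at $u=s$.
\begin{itemize}
\item On $[0,s]$, monotonicity and $s\ge s_0$ give $\mathbb{P}(X\le u)\le \mathbb{P}(X\le s)\le (Ms)^m$, so this piece is at most $(Ms)^m$.
\item On $[s,\infty)$, we have $u\ge s\ge s_0$, so $\mathbb{P}(X\le u)\le (Mu)^m$. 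This piece is at most
\begin{align*}
\int_0^\infty \lambda e^{-\lambda u}(Mu)^m\,du=\Gamma(m+1)\left(\frac{M}{\lambda}\right)^m=\Gamma(m+1)(Ms)^m .
\end{align*}
\end{itemize}
Altogether $\mathbb{E}e^{-X_j/s}\le (1+\Gamma(m+1))(Ms)^m$. Multiplying over $j$ and including the factor $e^{n}$ gives
\begin{align*}
\big(e^{1/m}(1+\Gamma(m+1))^{1/m} Ms\big)^{mn}.
\end{align*}

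\textbf{Main obstacle.} The one point needing care is making the constant $C_{\ref{Tenzorization}}$ absolute rather than dependent on $m$. Stirling's formula gives $\Gamma(m+1)^{1/m}\asymp m$, so with $\lambda=1/s$ the constant grows like $m$. To fix this I would instead take $\lambda=m/s$. Then:
\begin{itemize}
\item the Markov factor becomes $e^{mn}$;
\item the second piece becomes $\Gamma(m+1)m^{-m}(Ms)^m\le (Ms)^m$, using $m^{-m}\Gamma(m+1)\le 1$ for $m\ge 1$;
\item the first piece is still at most $(Ms)^m$.
\end{itemize}
This yields $\big(2^{1/m}e\,Ms\big)^{mn}\le (2eMs)^{mn}$ for $m\ge 1$.

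For $m<1$, $\Gamma(m+1)\le 1$ already, so the choice $\lambda=1/s$ gives a bounded constant $e^{1/m}2^{1/m}$ to the power $m$ per factor, i.e.\ at most $2e$. So either choice of $\lambda$ gives $C_{\ref{Tenzorization}}=2e$ in its respective range of $m$.
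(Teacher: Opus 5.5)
The paper gives no proof of this lemma. It is quoted as the standard tensorization lemma, and only the first bullet is actually used, in the proof of Lemma \ref{Small ball probability for Comp vectors}. Your proof of the first bullet is correct. On the event, at most $\varepsilon n$ indices have $X_j>\eta$, and the union bound costs $\sum_{i\le\varepsilon n}\binom{n}{i}\le \varepsilon^{-\varepsilon n}(1+\varepsilon)^n\le (e/\varepsilon)^{\varepsilon n}$. Your Laplace-transform argument for the second bullet is also correct. For $m\ge 1$, the choice $\lambda=m/s$ together with $\Gamma(m+1)\le m^m$ does give $C_{\ref{Tenzorization}}=2e$.

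Your last sentence, however, is wrong. For $m<1$ and $\lambda=1/s$ you get
\begin{align*}
\mathbb{P}\Big(\sum_{j}X_j\le ns\Big)\le e^{n}\big(2(Ms)^m\big)^n=\big((2e)^{1/m}Ms\big)^{mn}.
\end{align*}
So the constant in the form $(CMs)^{mn}$ is $(2e)^{1/m}$, not $2e$: you bounded $C^m$, not $C$.

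No choice of $\lambda$ repairs this, because with an absolute constant the second bullet is false as $m\to 0$. Take $M=1$, $s_0=0$, and $X_j=U_j^{1/m}$ with $U_j$ i.i.d.\ uniform on $[0,1]$, so that $\mathbb{P}(X_j\le s)\le s^m$. The Dirichlet integral gives $\mathbb{P}(\sum_{j\le n}X_j\le t)=t^{mn}\Gamma(1+m)^n/\Gamma(1+mn)$ for $t\le 1$. With $m=s=1/n$ this equals $\Gamma(1+m)^{1/m}\ge e^{-\gamma}>1/2$, by convexity of $\log\Gamma$, where $\gamma\approx 0.577$ is Euler's constant. The claimed bound is $(Cs)^{mn}=Cm$, so necessarily $C\ge 1/(2m)$.

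Hence the second bullet must be read with $C$ depending on $m$, or with $m$ bounded below (e.g.\ $m\ge 1$, the usual case where $m$ is a dimension). Under that reading, your $\lambda=1/s$ computation already proves it with $C=(e(1+\Gamma(m+1)))^{1/m}$. If you want the correct order, take $\lambda=m/s$ and bound the piece over $[0,s]$ by $(1-e^{-m})(Ms)^m\le m(Ms)^m$. This gives $C=e\,(m+\Gamma(m+1)m^{-m})^{1/m}$, which is of order $1/m$ as $m\to 0$ and matches the example above.
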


Next, we introduce an important definition in the field of non-asymptotic random matrix theory, which was originally proposed by Rudelson and Vershynin \cite{RV_adv}.

\begin{mydef}\label{Comp and Incomp}
Let $\delta,\rho \in \left( 0,1 \right)$ and $n \in \mathbb{Z}^{+}$, we define the sets of sparse, compressible and incompressible vectors as follows:

\begin{itemize}
   \item  $\mathrm{Sparse}_{n}\left( \delta \right)=\left\{ x \in \mathbb{R}^{n} : \left| \mathrm{supp}\left( x \right)\right| \le \delta n \right\};$
    \item $\mathrm{Comp}_{n}\left( \delta,\rho \right)=\left\{x \in S^{n-1} : \mathrm{dist}\left( x,\mathrm{Sparse}_{n}\left( \delta \right) \right) \le \rho \right\};$
    \item $\mathrm{Incomp}_{n}\left( \delta,\rho \right)=S^{n-1}\setminus \mathrm{Comp}_{n}\left( \delta,\rho \right)$.
\end{itemize}
 
\end{mydef}

\section{Compressible vectors}\label{Compressible vectors}

The goal of this section is to prove that for an $(n-k)\times n$ Bernoulli$(p)$ matrix $B$, with overwhelming probability, any collection of $k$ orthonormal vectors in $\ker B$ cannot all be compressible.

The following theorem is the main result of this section.
\begin{mypropo}\label{Compressible subspace}
    For any $p \in (0,1/2]$ and $\varepsilon >0$, there exist $n_{\ref{Compressible subspace}}$ and $\tau_{\ref{Compressible subspace}}$ depending only on $p$ and $\varepsilon$ such that for all $n \ge n_{\ref{Compressible subspace}}$ and $\log{n} \ge k \ge 1$. Define an event $\mathcal{E}_{\mathrm{comp}}$ as the event that there exist orthonormal vectors $x_{1},\dots,x_{k} \subset \ker{B}$ satisfying $x_{1},\dots,x_{k} \in  \mathrm{Comp}_{n}(\tau_{\ref{Compressible subspace}}^{2},\tau_{\ref{Compressible subspace}}^{4})$. Then
    \begin{align*}
        \mathbb{P}\left( \mathcal{E}_{\mathrm{comp}} \right) \le \left( 1-p+\varepsilon \right)^{kn}.
    \end{align*}
\end{mypropo}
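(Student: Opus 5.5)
We bound the probability that $\ker B$ contains a $k$-dimensional subspace spanned by orthonormal compressible vectors. The argument has three steps: localize the vectors to a few coordinates, prove a sharp small-ball estimate for one row against a $k$-frame, and finish with a net argument.

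\emph{Step 1: localization.} Each $x_i \in \mathrm{Comp}_n(\tau^2,\tau^4)$ is within $\tau^4$ of a vector supported on a set $J_i$ with $|J_i|\le \tau^2 n$. Put $J=\bigcup_i J_i$, so $|J|\le k\tau^2 n$. Since $k\le \log n$, this is only $o(n)$ if $\tau$ is allowed to depend on $n$. Otherwise we should work with the span directly: the subspace $E=\mathrm{span}(x_1,\dots,x_k)$ has an orthonormal basis that is $k\tau^4$-close to a frame supported on $J$. Let $X=(x_1,\dots,x_k)$ be the $n\times k$ matrix. Every row $R_j$ of $B$ satisfies $R_j X=0$. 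Writing $Y$ for the sparse approximation of $X$, we get $\|R_j Y\|_2\le \|R_j\|_2\,\|X-Y\|\lesssim \sqrt{pn}\,\sqrt k\,\tau^4$ with high probability. This error is not small, so we instead pass to averages over rows, as follows.

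\emph{Step 2: single-row small ball.} For a fixed $n\times k$ matrix $Y$ with nearly orthonormal columns, we want
\[
\mathbb{P}\bigl(\|R Y\|_2\le t\bigr)\le (1-p+\varepsilon/2)^{k}
\]
for $t$ a small multiple of $\sqrt k$ (the scale set by the approximation error), where $R$ is a Bernoulli($p$) row. This is the key new ingredient and the main obstacle. Hanson--Wright only gives $e^{-ck}$, which is too weak; we need the sharp base $1-p$.

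Here is the approach I would try first. Choose $k$ rows of $Y$ (coordinates) on which $Y$ has a well-conditioned $k\times k$ minor. This is possible because the columns of $Y$ are orthonormal, and a restricted-invertibility / column-subset argument gives $k$ coordinates $j_1,\dots,j_k$ with $\sigma_{\min}\gtrsim$ a constant depending on $\tau$. Then condition on all other coordinates of $R$. The remaining randomness is $(R_{j_1},\dots,R_{j_k})\in\{0,1\}^k$, and the map to $\mathbb{R}^k$ is injective with separation bounded below. So a ball of small radius contains at most one image point, and the probability is at most $\max_{z}\mathbb{P}(R_{J'}=z)=(1-p)^k$, using $p\le 1/2$.

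To get good separation for all points simultaneously, I would replace the Bernoulli vector by the auxiliary vector mentioned in the introduction. Write each Bernoulli($p$) variable as a mixture that with probability $2p$ is a symmetric $\{0,1\}$ coin and otherwise is $0$. Then apply the one-point argument only to the coins. Since $\mathbb{P}(\text{all } k \text{ coins fixed at a point})\le (1-2p+p)^k=(1-p)^k$, this exactly recovers the base $1-p$.

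\emph{Step 3: net and tensorization.} The rows of $B$ are independent. Applying the first part of Lemma \ref{Tenzorization} to $\|R_jY\|_2^2$, with $\tau$ replaced by $(1-p+\varepsilon/2)^k$ and a small fraction of exceptional rows, gives
\[
\mathbb{P}\bigl(\|BY\|_{\mathrm{HS}}^2\le \eta\,\varepsilon' (n-k)\bigr)\le \Bigl(\tfrac{e}{\varepsilon'}\Bigr)^{\varepsilon' n}(1-p+\varepsilon/2)^{k(n-k-\varepsilon' n)}.
\]
Next, choose a net for $k$-frames supported on a set of size $\le k\tau^2 n$ and close to compressible frames. The choice of support costs $\binom{n}{\tau^2 n}^k\le e^{k\tau^2\log(e/\tau^2)n}$, and the coefficients cost $(C/\tau^4)^{k\cdot\tau^2 n}$; both are $e^{o_\tau(1)kn}$.

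The approximation error is controlled on the event $\|B\|\le C\sqrt n$ (or the weaker HS bound $\|B\|_{\mathrm{HS}}\le C n$). Its complement has probability $e^{-cn^2}$, or at least far below $(1-p)^{kn}$ when $k\le\log n$. On this event, $\|B(X-Y)\|_{\mathrm{HS}}\le C\sqrt n\sqrt k\,\tau^4$, which is below the threshold $\sqrt{\eta\varepsilon' n}\sim \sqrt{kn}$ when $\tau$ is small.

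Finally, take the union bound and choose $\tau$ and $\varepsilon'$ small depending on $p$ and $\varepsilon$. The losses $e^{o(1)kn}$ and $(1-p)^{-k^2}$ (the latter negligible since $k\le\log n$) are absorbed into $(1-p+\varepsilon)^{kn}$ for $n\ge n_{\ref{Compressible subspace}}$.
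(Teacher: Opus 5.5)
The one-row small-ball bound you want in Step 2 is true, and your overall architecture matches the paper's: a sharp bound at scale $c\sqrt k$, tensorization over rows by Lemma \ref{Tenzorization}, and a union bound over a sparse net. It is Lemma \ref{Small ball probability for Comp vectors}. However, both steps that carry the argument fail as written.

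\emph{Step 2.} Restricted invertibility does not give a $k\times k$ minor with $\sigma_{\min}\ge c(\tau)$. Sparsity here allows supports of size $\tau^2 n$. For example, take $k$ orthonormal vectors with entries $\pm(\tau^2 n)^{-1/2}$ on a common block of $\tau^2 n$ coordinates. They are exactly sparse, yet every $k\times k$ minor has $\sigma_{\min}(Y_S)\le \|Y_S\|_{\mathrm{HS}}/\sqrt k=\sqrt{k/(\tau^2 n)}\to 0$.

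Even with a perfect minor, say $Y=(e_1,\dots,e_k)$, the ball of radius $c\sqrt k$ about $0$ contains every $z\in\{0,1\}^k$ with at most $c^2k$ ones. So ``at most one image point'' is false. The bound $(1-p+o_c(1))^k$ at this scale is an entropy statement, not a one-point bound. Writing Bernoulli$(p)$ as a $2p$-mixture of fair coins changes only the law of $z$; it creates no separation for the map $z\mapsto zY_S$.

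What is missing is a high-dimensional anti-concentration bound whose constant tends to $1$. The paper gets it by a tensor-power trick:
\begin{itemize}
\item Take independent copies $B_1,\dots,B_\mu$ of the row and set $X=\sum_{i\le\mu}2^{i-1}B_i$. Its coordinates are integers with atoms at most $(1-p)^\mu$.
\item With $P$ the projection onto the column space of $V$, the triangle inequality gives $\mathbb{P}(\|V^{\top}B\|_2\le\delta\sqrt l)^{\mu}\le\mathbb{P}(\|PX\|_2\le 2^{\mu+1}\delta\sqrt l)$.
\item Lemma \ref{Small ball probability for linear image} bounds the right side by $(C(1-p)^{\mu})^{l}$ once $\delta$ is small in terms of $\mu$.
\item Taking the $\mu$-th root leaves $(C^{1/\mu}(1-p))^{l}\le(1-p+\varepsilon_0)^{l}$.
\end{itemize}

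\emph{Step 3.} $B$ is a non-centered $0/1$ matrix, so $\|B\|\ge\|B\mathbf 1\|_2/\sqrt n\approx pn$. The event $\|B\|\le C\sqrt n$ itself has probability $e^{-\Omega(n^2)}$; it is not its complement that is small. With only the deterministic bound $\|B\|_{\mathrm{HS}}\le n$, the error can be too large. For $x=s+\tau^4 w$ with $s$ sparse and $w$ a flat unit vector, $\|B(x-s)\|_2\approx p\tau^4 n\gg\sqrt{kn}$.

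Moreover, $\tau$ is fixed while $k$ may reach $\log n$, so deterministic sparse approximations need not be nearly orthonormal. For $k=\tau^{-8}$, let $f_i=e_i-\frac1k\sum_{j\le k}e_j$ and let $w$ be a unit vector supported off $[k]$. Then $x_i=f_i+\tau^4 w$ are orthonormal and lie in $\mathrm{Comp}_n(\tau^2,\tau^4)$, yet $\sum_i f_i=0$.

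The paper avoids both problems with Rudelson's random rounding (Lemma \ref{Sparse net}). Each $v_j$ itself is rounded to $\frac{\tau}{\sqrt n}\mathbb{Z}^n$ with $\mathbb{E}u_j=v_j$ and independent coordinate errors. This uses only $Bv_j=0$ and $\|B\|_{\mathrm{HS}}\le n$ to get $\|B(v_j-u_j)\|_2\le 3\tau\sqrt n$. The independent errors also keep the $u_j$ sparse and $\frac12$-almost orthogonal for every $k$.
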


The first step of the proof is to construct an appropriate $\varepsilon$-net for the set of $k$ orthonormal vectors that are compressible. In fact, this construction follows the approach introduced by Rudelson \cite{Rudelson_aop} via Random rounding. We begin with the following definition and then describe the construction of the net.

\begin{mydef}\label{Almost orthogonal}
Let $\nu \in \left( 0,1 \right)$. An l-tuple of vectors $\left( v_{1},v_{2},\dots,v_{l} \right) \subset \mathbb{R}^{n} \setminus \left\{ 0 \right\}$ is called $\nu$-almost orthogonal if the $n \times l$ matrix $V_{0}$ with $\mathrm{Col}_{j}\left( V_{0} \right)=\frac{v_{j}}{\Vert v_{j} \Vert_{2}} $ satisfying
$$1-\nu \le s_{l}\left( V_{0} \right) \le s_{1}\left( V_{0} \right) \le 1+\nu.$$
\end{mydef}

\begin{mylem}[Proposition 4.2 in \cite{Rudelson_aop}]\label{Sparse net}
    Let $v_1,\dots,v_{l} \in \mathrm{Comp}_{n}(\tau^{2},\tau^{4})$ be an orthogonal system. Then there exists $u_1,\dots,u_{l} \in \mathrm{Sparse}_{n}(4\tau^{2})\cap\frac{\tau}{\sqrt{n}}\mathbb{Z}^{n}\cap \frac{3}{2}B_{2}^{n}\setminus \frac{1}{2}B_{2}^{n} $ are $\frac{1}{2}$-almost orthogonal satisfying 
    \begin{align*}
        \Vert B(v_{j}-u_{j})\Vert_{2} \le 3\tau\sqrt{n} \text{ for all } j \in [l].
    \end{align*}
\end{mylem}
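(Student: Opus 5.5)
The plan is to build each $u_j$ from $v_j$ by independent \emph{random rounding} onto the lattice $\frac{\tau}{\sqrt n}\mathbb{Z}^n$, in the spirit of Rudelson \cite{Rudelson_aop}. Each requirement is then a high-probability or positive-probability event, and I will show they occur simultaneously with positive probability. Orthogonal vectors in $\mathrm{Comp}_n(\tau^2,\tau^4)\subset S^{n-1}$ are unit vectors. Since $v_j$ is compressible, I write $v_j=s_j+r_j$ with $|\mathrm{supp}(s_j)|\le\tau^2 n$ and $\Vert r_j\Vert_2\le\tau^4$; let $S_j=\mathrm{supp}(s_j)$.

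For each coordinate $i$ I write $v_j(i)=\frac{\tau}{\sqrt n}(m+\theta)$ with $m\in\mathbb{Z}$ and $\theta\in[0,1)$. I set $u_j(i)=\frac{\tau}{\sqrt n}(m+1)$ with probability $\theta$ and $\frac{\tau}{\sqrt n}m$ otherwise, independently over all pairs $(i,j)$. Then $\mathbb{E}u_j=v_j$ and each entry of $u_j-v_j$ is centred, bounded by $\tau/\sqrt n$, and has variance at most $\tau^2/(4n)$.

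\emph{Per-column estimates.} First, $u_j(i)\neq0$ can happen off $S_j$ only with probability at most $|v_j(i)|\sqrt n/\tau$. By Cauchy--Schwarz, the expected number of such $i$ is at most $\frac{\sqrt n}{\tau}\cdot\sqrt n\,\Vert r_j\Vert_2\le\tau^3 n$. Markov then gives $|\mathrm{supp}(u_j)|\le\tau^2n+4\tau^3n\le 4\tau^2 n$ with probability at least $3/4$ for $\tau$ small.

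Second, I need no assumption on $B$, because $B$ has $0/1$ entries and hence $\Vert \mathrm{Col}_i(B)\Vert_2^2\le n$. By independence,
\begin{align*}
\mathbb{E}\Vert B(v_j-u_j)\Vert_2^2=\sum_i \Vert\mathrm{Col}_i(B)\Vert_2^2\,\mathrm{Var}(u_j(i))\le n\cdot n\cdot\frac{\tau^2}{4n}=\frac{\tau^2 n}{4}.
\end{align*}
Markov then yields $\Vert B(v_j-u_j)\Vert_2\le 3\tau\sqrt n$ with probability at least $1-1/36$. The same variance computation gives $\Vert u_j-v_j\Vert_2\le\tau$ with probability at least $3/4$, so that $\frac12\le\Vert u_j\Vert_2\le\frac32$.

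Hence each column is ``good'' (all three events) with probability at least $1/5$. Since the roundings of different columns are independent, all $l$ columns are good simultaneously with probability at least $5^{-l}$.

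\emph{Almost orthogonality (the main obstacle).} A union bound or Hilbert--Schmidt bound loses a factor $l$, which is fatal since $l$ may grow. Instead I treat $W=U-V$, where $U$ and $V$ are the $n\times l$ matrices with columns $u_j$ and $v_j$. It is a matrix with independent centred entries bounded by $\tau/\sqrt n$, and I use its operator norm. A standard bound for such matrices gives $\mathbb{E}\Vert W\Vert\le C\frac{\tau}{\sqrt n}(\sqrt n+\sqrt l)\le 2C\tau$. Moreover $W\mapsto\Vert W\Vert$ is convex and $1$-Lipschitz in the Hilbert--Schmidt norm, so Talagrand's concentration inequality for bounded independent entries gives $\Vert W\Vert\le 3C\tau$ with probability at least $1-e^{-cn}$.

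On this event, $V$ has orthonormal columns, so all its singular values equal $1$. Weyl's inequality then puts all singular values of $U$ in $[1-3C\tau,1+3C\tau]$. On the good-column event I also have $\bigl|\Vert u_j\Vert_2-1\bigr|\le\tau$, so normalizing the columns perturbs $U$ by at most $\Vert U-U_0\Vert\le \max_j|\Vert u_j\Vert_2^{-1}-1|\cdot\Vert U\Vert\le 3\tau$. Hence the column-normalized matrix $U_0$ has singular values in $[\frac12,\frac32]$ once $\tau$ is small, which is exactly $\frac12$-almost orthogonality.

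\emph{Conclusion.} The event that every column is good and $\Vert W\Vert\le3C\tau$ has probability at least $5^{-l}-e^{-cn}$. Since $v_1,\dots,v_l$ are orthonormal in $\mathbb{R}^n$, we have $l\le n$. For $l\le c'n$ with $c'<c/\log 5$ this quantity is positive, and this covers all $l\le k$ with $k\le\log n$ as used in Proposition~\ref{Compressible subspace}. For the full range $l\le n$ I would compensate by choosing $\tau$ so small that $c$ (which scales like $\tau^{-2}$ in the concentration bound) dominates $\log 5$.

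So some realization satisfies all requirements. It has $u_j\in\mathrm{Sparse}_n(4\tau^2)\cap\frac{\tau}{\sqrt n}\mathbb{Z}^n\cap\frac32B_2^n\setminus\frac12B_2^n$, the family is $\frac12$-almost orthogonal, and $\Vert B(v_j-u_j)\Vert_2\le3\tau\sqrt n$ for all $j$. The delicate point is precisely this trade-off: the factor $5^{-l}$ from independent per-column events must be beaten by the $e^{-cn}$ concentration of the operator norm.
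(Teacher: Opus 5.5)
Your proof is correct and follows the random-rounding construction that the paper imports from Rudelson: independent rounding of each $v_j$ to $\frac{\tau}{\sqrt n}\mathbb{Z}^n$, first/second-moment bounds using only $\Vert \mathrm{Col}_i(B)\Vert_2^2\le n$ for a $0/1$ matrix, independence across columns, and an operator-norm (not Hilbert--Schmidt) bound on the error matrix $W$ for almost orthogonality. One minor fix: with threshold $3C\tau$ the Talagrand exponent $c$ is an absolute constant, not of order $\tau^{-2}$. For the full range $l\le n$, require instead $\Vert W\Vert\le\frac14$; this fails with probability at most $4\exp(-c'n/\tau^{2})$ and still gives $\frac12$-almost orthogonality for small $\tau$ (in the paper's application $l\le k\le\log n$, so your original version already suffices).
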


In addition, we need to estimate the small ball probability on the constructed net.
More precisely, we need to bound the probability that the Euclidean norm of $Wb$ is small, where $W$ is an almost orthogonal $k \times n$ matrix and $b$ is an $n$-dimensional Bernoulli$(p)$ random vector. This amounts to a high-dimensional Littlewood-Offord problem.

Standard tools such as Hanson-Wright type inequalities do not provide sufficiently sharp probability bounds for our purposes. Instead, we employ a refined small ball estimate. This approach begins with the following small ball probability lemma, due to Rudelson and Vershynin \cite{RV_imrn}.

\begin{mylem}[Corollary 1.4 in \cite{RV_imrn}]\label{Small ball probability for linear image}
    Consider a random vector $X = (X_1,...,X_n)$ where $X_i$ are real-valued independent random variables. Let $t, b \ge  0$ be such that
\begin{align*}
    \mathcal{L}\left( X_{i}, t \right) \le b \text{ for all } i \in [n]
\end{align*}
 Let $P$ be an orthogonal projection in $\mathbb{R}^{n}$ onto a $d$-dimensional subspace. Then
 \begin{align*}
     \mathcal{L}\left( PX, t\sqrt{d}  \right) \le \left( C_{\ref{Small ball probability for linear image}}b \right)^{d},
 \end{align*}
 where $C_{\ref{Small ball probability for linear image}}>0$ is a absolute constant.
\end{mylem}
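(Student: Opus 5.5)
The plan is the Rudelson--Vershynin smoothing argument: reduce to random variables with bounded densities, and use the known fact that bounded densities survive orthogonal projection with a dimension-dependent constant.

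\textbf{Step 1 (smoothing).} Let $Y=(Y_1,\dots,Y_n)$ be independent of $X$, with $Y_i$ i.i.d.\ uniform on $[-t,t]$. The density of $X_i+Y_i$ at $s$ equals $\mathbb{P}(|X_i-s|\le t)/(2t)$. Hence each $Z_i:=X_i+Y_i$ has density bounded by $b/(2t)$. On the other hand,
\begin{align*}
\mathbb{E}\Vert PY\Vert_2^2=\mathrm{tr}(P)\,\mathbb{E}Y_1^2=dt^2/3 .
\end{align*}
By Markov's inequality, $\mathbb{P}(\Vert PY\Vert_2\le t\sqrt d)\ge 2/3$. Fix $y$. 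Since $Y$ is independent of $X$ and the triangle inequality applies,
\begin{align*}
\mathbb{P}\left(\Vert PZ-y\Vert_2\le 2t\sqrt d\right)\ \ge\ \tfrac23\,\mathbb{P}\left(\Vert PX-y\Vert_2\le t\sqrt d\right).
\end{align*}
So it suffices to bound the left-hand side by $(Cb)^d$.

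\textbf{Step 2 (density of projections; the main obstacle).} I would show that if $Z_i$ are independent with densities bounded by $K$, then $PZ$, viewed as a random vector in the $d$-dimensional range $E=P\mathbb{R}^n$, has density bounded by $(CK)^d$. This is the genuinely nontrivial input.

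First, rescale so that $K=1$. The density of $PZ$ at $w\in E$ is the integral of $\prod_i f_i(z_i)$ over the affine fibre $\{z: Pz=w\}$, an $(n-d)$-dimensional slice.

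The cleanest route is the Brascamp--Lieb inequality in Ball's geometric form. Write $P=\sum_i c_i\, u_iu_i^{\top}$ with $u_i=Pe_i/\Vert Pe_i\Vert_2$ and $c_i=\Vert Pe_i\Vert_2^2$, so that $\sum_i c_i=d$. One then tests the density against a function supported in a small ball and applies Brascamp--Lieb to bound it by $\prod_i \Vert f_i\Vert_\infty^{c_i}$, up to a factor $C^d$. Coordinates with large $c_i$ (close to $1$) are split off first, and the bound is then applied with the relaxed weights.

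If this becomes delicate, the fallback is Ball's cube-slicing theorem combined with a layer-cake decomposition. Each density bounded by $1$ is a mixture of uniform densities on sets of measure at least $1$, and by rearrangement one may reduce to uniform densities on intervals of length $\ge1$. Ball's theorem says that sections of the unit cube have volume at most $(\sqrt2)^{n-d}$, and the bounded-density claim for projections of a uniform vector on a product of intervals follows from this.

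\textbf{Step 3 (conclusion).} By Step 2 applied with $K=b/(2t)$,
\begin{align*}
\mathbb{P}\left(\Vert PZ-y\Vert_2\le 2t\sqrt d\right)\le \left(\frac{Cb}{2t}\right)^{d}\mathrm{vol}_d\left(2t\sqrt d\,B_2^d\right)\le \left(\frac{Cb}{2t}\right)^{d}(C't)^d .
\end{align*}
The last inequality uses $\mathrm{vol}(\sqrt d B_2^d)\le C^d$. Combining this with Step 1 gives $\mathcal{L}(PX,t\sqrt d)\le \tfrac32 (C''b)^d\le (C_{\ref{Small ball probability for linear image}}b)^d$.
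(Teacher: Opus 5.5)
Your argument is correct and is essentially the Rudelson--Vershynin proof of the cited Corollary 1.4; the paper itself only imports the statement. As in their proof, you smooth with independent uniform noise so each coordinate has density at most $b/(2t)$, apply their bounded-density theorem for projections (your Step 2), and integrate over a ball of radius $2t\sqrt{d}$ in the range $E$ of $P$; Steps 1 and 3 are fine as written.

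Your sketch of Step 2 needs two corrections. First, Brascamp--Lieb should be run on the fibre $E^{\perp}$, using the decomposition of the identity there with unit vectors $(e_i-Pe_i)/\Vert e_i-Pe_i\Vert_2$ and weights $1-c_i$, rather than the decomposition on $E$. This gives a density for $PZ$ of at most $K^d\prod_i (1-c_i)^{-(1-c_i)/2}\le(\sqrt{e}\,K)^d$, because $-(1-c)\log(1-c)\le c$ and $\sum_i c_i=d$. No splitting is needed beyond the coordinates with $e_i\in E$. Second, in your fallback, Ball's bound for codimension-$d$ sections of the cube is $(\sqrt{2})^d$, not $(\sqrt{2})^{n-d}$.
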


In fact, if the constant $C_{\ref{Small ball probability for linear image}}$ were arbitrarily close to $1$, this would already yield the desired result. Our strategy is to introduce a new auxiliary random variable to replace the Bernoulli$(p)$ distribution, which allows us to improve the constant $C_{\ref{Small ball probability for linear image}}$ so that it becomes close to $1$. This constitutes the main idea of the proof in this section.

We now present an almost optimal small ball probability bound for Bernoulli random variables.

\begin{mylem}\label{Small ball probability for Comp vectors}
    Let $m, n \in \mathbb{N}$ be such that $m \le n$ and let $M$ be an $m \times n$ matrix with independent Bernoulli($p$) entries. Then for all $\varepsilon >0$, there exist $c_{\ref{Small ball probability for Comp vectors}} >0$ depend on $p$ and $\varepsilon$ such that for all $\frac{1}{2}$-almost orthogonal system $v_1, \dots,v_l \in S^{n-1}$, we have
    \begin{align*}
        \mathbb{P}\left( \Vert Mv_{j}\Vert_{2} \le c_{\ref{Small ball probability for Comp vectors}} \sqrt{m} \text{ for all } j \in [l]   \right) \le (1-p+\varepsilon)^{lm}.
    \end{align*}
\end{mylem}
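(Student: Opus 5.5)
The proof reduces the statement to a single-row small ball estimate with the sharp base $1-p$, and then tensorizes over the $m$ independent rows of $M$. Let $W$ be the $l\times n$ matrix with rows $v_1^\top,\dots,v_l^\top$, and let $b_1,\dots,b_m$ be the rows of $M$. These are i.i.d. Bernoulli$(p)$ vectors and
\begin{align*}
\sum_{j=1}^{l}\Vert Mv_j\Vert_2^2=\sum_{i=1}^{m}\Vert Wb_i\Vert_2^2 .
\end{align*}
So the event in the statement forces $\sum_i X_i\le c^2 lm$, where $X_i:=\Vert Wb_i\Vert_2^2/l$ are independent and non-negative. If we can show
\begin{align*}
\mathbb{P}\left(\Vert Wb\Vert_2\le \eta\sqrt{l}\right)\le (1-p+\varepsilon/2)^{l}
\end{align*}
for some $\eta=\eta(p,\varepsilon)>0$, then the first part of Lemma \ref{Tenzorization}, applied with this $\eta^2$ and a small parameter $\varepsilon'$, gives the bound
\begin{align*}
(e/\varepsilon')^{\varepsilon' m}(1-p+\varepsilon/2)^{l(1-\varepsilon')m}.
\end{align*}
Since $l\ge1$, choosing $\varepsilon'$ small in terms of $p,\varepsilon$ makes this at most $(1-p+\varepsilon)^{lm}$, with $c_{\ref{Small ball probability for Comp vectors}}=\eta\sqrt{\varepsilon'}$.

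For the single-row bound, Lemma \ref{Small ball probability for linear image} alone is too weak: its constant $C_{\ref{Small ball probability for linear image}}$ is not close to $1$. The plan follows the strategy announced before the lemma and replaces $b$ by an auxiliary random vector whose per-coordinate concentration is essentially $1-p$.

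First, reduce to a projection. Almost orthogonality gives $\Vert Wb\Vert_2\ge \frac12\Vert Pb\Vert_2$, where $P$ is the orthogonal projection onto $\mathrm{span}(v_1,\dots,v_l)$. It therefore suffices to bound $\mathcal L(Pb,2\eta\sqrt l)$.

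Second, split each coordinate into a lumped part and a fine part. Write $b_i=\theta_i Y_i$, where $\theta_i\sim$ Bernoulli$(q)$ with $q=p/(1-\kappa)$ slightly larger than $p$. Conditionally on $\theta_i=1$, take $Y_i\in\{0,1\}$ with $\mathbb P(Y_i=1)=1-\kappa$. Condition on $\theta$ and put $S=\{i:\theta_i=1\}$. On $S^c$ the vector $b$ is deterministic, while on $S$ its coordinates are Bernoulli$(1-\kappa)$.

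Third, extract the factor $1-p$ from an explicit set of about $l$ coordinates. Using the restricted invertibility principle (Bourgain–Tzafriri / Spielman–Srivastava), choose $J\subset[n]$ with $|J|\ge(1-\varepsilon'')l$ on which $P$, restricted to the coordinates $J$, is well conditioned at the natural scale. Then condition on all coordinates outside $J$. What remains is a sum over $b_J\in\{0,1\}^J$, and each point has probability at most $(1-p)^{|J|}$ up to the factor $\bigl((1-p)/(1-q)\bigr)^{|J|}$ coming from the change of measure. The factor $(1-p)^{(1-\varepsilon'')l}$ is the leading term; everything else must be an $e^{o(l)}$-type loss, which is absorbed by choosing $\kappa,\varepsilon''$ small in terms of $\varepsilon$.

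Fourth, control the remaining losses. These are the entropy of the cube points $b_J$ that can land in the $2\eta\sqrt l$-ball, and the at most $\varepsilon'' l$ coordinates not covered by $J$. For these I would apply Lemma \ref{Small ball probability for linear image} to the fine Bernoulli$(1-\kappa)$ part on the complementary directions. The constant $C_{\ref{Small ball probability for linear image}}^{\varepsilon'' l}$ lost there is harmless because it only appears with exponent $\varepsilon'' l$. The radius $\eta$ is then chosen small enough that Lemma \ref{Integer size in ball} bounds the number of cube points in the ball by $e^{\varepsilon'' l}$.

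The main obstacle is this single-row estimate, specifically the third and fourth steps. Restricted invertibility only yields well-conditioning at scale roughly $\sqrt{l/n}$ per column, so one has to check that a ball of radius $\eta\sqrt l$ meets only $e^{o(l)}$ of the relevant cube points. This requires a careful choice of scales: $\eta$ must be small relative to the conditioning of $P$ on $J$. If restricted invertibility at this scale turns out to be insufficient, I would first try the smoothing variant instead. There one adds an independent small Gaussian $\sigma g$ to $b$, uses
\begin{align*}
\mathcal L(Wb,t)\le \frac{\mathcal L\bigl(W(b+\sigma g),2t\bigr)}{\mathbb P\left(\Vert \sigma Wg\Vert_2\le t\right)},
\end{align*}
and bounds the density of $W(b+\sigma g)$ directly. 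The point is that its largest atom-cluster, the one near $W0$, has mass at most about $(1-p)^{l}$.
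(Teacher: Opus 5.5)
\textbf{There is a genuine gap: the single-row bound is never established, and the route you propose for it fails for spread systems.}

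Your reduction $\Vert Wb\Vert_2\ge\frac12\Vert Pb\Vert_2$ and the tensorization over the $m$ rows via Lemma \ref{Tenzorization} are correct and match the paper. The missing piece is the bound $\mathbb{P}(\Vert Wb\Vert_2\le\eta\sqrt{l})\le(1-p+\varepsilon/2)^{l}$ with $\eta=\eta(p,\varepsilon)$, which is the entire content of the lemma.

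\emph{Why Steps 3--4 fail.} After conditioning on $b_{J^c}$, the $2^{|J|}$ points $P_Jb_J$ form a set of diameter at most $|J|\max_i\Vert Pe_i\Vert_2$.
\begin{itemize}
\item Take spread $v_j$, say orthonormal with entries $\pm n^{-1/2}$. Then $\Vert Pe_i\Vert_2=\sqrt{l/n}$ for every $i$. Since $P_J$ must be injective, $|J|\le l$. So the diameter is at most $l^{3/2}n^{-1/2}$.
\item This is far below the radius $2\eta\sqrt{l}$: in the application $l\le\log n$, and $\eta$ may not depend on $n$.
\item Hence for the worst conditioning every cube point lands in the ball, and your count gives only $(1-p)^{|J|}2^{|J|}\ge1$.
\item Equivalently, $s_{\min}(P_J)\le\sqrt{l/n}$. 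The preimage ball in $\mathbb{R}^J$ then has radius at least $2\eta\sqrt{n}\gg\sqrt{|J|}$, so Lemma \ref{Integer size in ball} gives nothing.
\item Choosing $\eta$ small relative to this conditioning would force $\eta\lesssim\sqrt{l/n}$. The statement forbids this, since $c_{\ref{Small ball probability for Comp vectors}}$ may depend only on $p,\varepsilon$.
\end{itemize}
Already for $l=1$ and $v=n^{-1/2}(1,\dots,1)$, you keep one coordinate, whose two values move $\langle v,b\rangle$ by only $n^{-1/2}\ll\eta$. The anticoncentration comes from all $n$ coordinates jointly, and conditioning on $b_{J^c}$ discards it.

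\emph{The other ingredients do not rescue it.} The splitting $b_i=\theta_iY_i$ only makes each coordinate, conditionally on $\theta$, more concentrated than Bernoulli$(p)$. Applying Lemma \ref{Small ball probability for linear image} to the Bernoulli$(1-\kappa)$ part reintroduces the same constant with the worse base $1-\kappa$. The smoothing fallback merely restates the goal. The atom bound $\mathbb{P}(Wb=y)\le(1-p)^{l}$ is easy, but passing to balls of radius $\eta\sqrt{l}$ without a constant factor per dimension is exactly what must be shown.

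\textbf{The missing idea is an amplification that removes $C_{\ref{Small ball probability for linear image}}$ by taking a root.} Let $B_1,\dots,B_\mu$ be independent copies of $b$ and set $X=\sum_{i=1}^{\mu}2^{i-1}B_i$. Using $\Vert Wx\Vert_2\ge\frac12\Vert Px\Vert_2$,
\begin{align*}
\mathbb{P}\left(\Vert Wb\Vert_2\le\delta\sqrt{l}\right)^{\mu}
&=\mathbb{P}\left(\Vert 2^{i-1}WB_i\Vert_2\le 2^{i-1}\delta\sqrt{l}\ \text{ for all } i\in[\mu]\right)\\
&\le\mathbb{P}\left(\Vert PX\Vert_2\le 2^{\mu+1}\delta\sqrt{l}\right).
\end{align*}
Each coordinate of $X$ is an integer whose $\mu$ binary digits are i.i.d.\ Bernoulli$(p)$. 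So every atom has mass at most $(1-p)^{\mu}$, and $\mathcal{L}(x_i,1/3)\le(1-p)^{\mu}$. Take $\delta=2^{-\mu-3}$. Lemma \ref{Small ball probability for linear image} then bounds the right-hand side by $\bigl(C_{\ref{Small ball probability for linear image}}(1-p)^{\mu}\bigr)^{l}$. Its $\mu$-th root is $C_{\ref{Small ball probability for linear image}}^{l/\mu}(1-p)^{l}\le(1-p+\varepsilon/2)^{l}$ once $\mu=\mu(p,\varepsilon)$ is large.

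This is the paper's argument up to constants. With this single-row bound, your row tensorization completes the proof as you planned.
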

\begin{myrem}\label{SBP remark}
    The exponent in Lemma \ref{Small ball probability for Comp vectors} is optimal uniformly over all almost orthogonal systems. Indeed, for $v_{j}=e_{j}, j \in [l]$, the event that the first $l$ columns of $M$ vanish has probability $(1-p)^{lm}$ and implies $Mv_{j}=0$ for every $j \in [l]$.
\end{myrem}
\begin{proof}
    Let $V=(v_1,\dots,v_{l})$ be an $n \times l$ matrix with $1/2 \le s_{l}(V) \le  s_{1}(V) \le 3/2 $, and $B=(b_{1},\dots,b_{n})$ be random vectors with independent Bernoulli($p$) entries. We first estimate the upper bound of the following probability:
    \begin{align*}
        \mathbb{P}\left( \Vert V^{\top}B\Vert_{2} \le \delta\sqrt{l}  \right)
    \end{align*}
    Note that there exist $u_1,\dots,u_l \in S^{n-1} $ orthogonal vectors such that for all vectors $X \in \mathbb{R}^{n}$
    \begin{align*}
        \Vert V^{\top}X\Vert_{2}^{2} \ge \frac{1}{4}\sum_{i=1}^{l}\left\langle u_{i},X \right\rangle^{2} \ge \Vert PX\Vert_{2}^{2}/4,
    \end{align*}
    where $P:=\sum_{i=1}^{l}u_i u_i^{\top}$ is an orthogonal projection in $\mathbb{R}^{n}$ onto a $l$-dimensional subspace.

    Let $B_{i}=(b^{(i)}_{j})_{j \in [n]}, i \in [\mu]$ be random vectors with independent Bernoulli($p$) entries, and let $B_{i}$ and $B_{j}$ be independent. Set $a_i = 2^{i-1}$ for every $i \in [\mu]$. We have 
    \begin{align*}
        \mathbb{P}\left( \Vert V^{\top}B\Vert_{2} \le \delta\sqrt{l}  \right)
        & \le \mathbb{P}\left( \Vert V^{\top}a_{i}B_{i}\Vert_{2} \le \delta a_{i}\sqrt{l} \text{ for all } i \in [\mu] \right)^{1/\mu} \\
        & \le \mathbb{P}\left( \Vert PX\Vert_{2} \le 2^{\mu+1} \delta\sqrt{\mu l}  \right)^{1/\mu},
    \end{align*}
    where $X:= \left( x_1,\dots,x_{n}  \right)=\left(\sum_{j=1}^{\mu}a_{j}b_{1}^{(j)},\dots, \sum_{j=1}^{\mu}a_{j}b_{n}^{(j)}   \right)$.

    It is not difficult to note that for all $i \le n$
    \begin{align*}
        \mathcal{L}\left( x_{i},1/2 \right) \le (1-p)^{\mu}.
    \end{align*}
    Let $\mu :=  \frac{\log{C_{\ref{Small ball probability for linear image}}}}{\log{ (1+\varepsilon_{0})}}  $ and $\delta = 2^{-\mu-2}\mu^{-1/2}$, applying Lemma \ref{Small ball probability for linear image}, we have  
    \begin{align*}
        \mathbb{P}\left(  \Vert V^{\top}B\Vert_{2} \le \delta\sqrt{l} \right) \le \left( C_{\ref{Small ball probability for linear image}}(1-p)^{\mu}  \right)^{l/\mu} \le \left( 1-p+\varepsilon_{0} \right)^{l}
    \end{align*}
    Applying the above inequality for every row of $M$, we obtain
    \begin{align*}
        \mathbb{P}\left( \Vert V^{\top} \mathrm{row}_{i}(M)\Vert_{2} \le \delta \sqrt{l} \right) \le (1-p+\varepsilon_{0})^{l}.
    \end{align*}
    Finally, we use the Lemma \ref{Tenzorization} to have 
    \begin{align*}
        \mathbb{P}\left( \Vert Mv_{j}\Vert_{2} \le \nu \delta \sqrt{m} \text{ for } j \in [l]   \right) 
        & \le  \mathbb{P}\left( \sum_{i=1}^{m}\Vert V^{\top}\mathrm{row}_{i}(M)\Vert_{2}^{2} \le \nu^{2} \delta^{2} ml \right) \\
        & \le \left( \frac{e}{\nu^{2}} \right)^{\nu^{2} m} (1-p+\varepsilon_{0})^{(1-\nu^{2})lm}.
        \end{align*}
        It remains to note that by choosing $\nu$ to be sufficiently small and $\varepsilon_{0}=\varepsilon/2$, we can complete the proof of this result.
\end{proof}

\begin{proof}[\textsf{Proof of the Proposition \ref{Compressible subspace}}]
    Combining Lemma \ref{Integer size in ball}, Lemma \ref{Sparse net} and Lemma \ref{Small ball probability for Comp vectors}, we obtain that for all $\varepsilon >0$, there exists $c$ depending only on $\varepsilon$ such that for all $\tau \le c$:
    \begin{align*}
        \mathbb{P}\left( \mathcal{E}_{\text{comp}} \right) \le \left(2+\frac{C_{\ref{Integer size in ball}}}{\tau}   \right)^{4\tau^{2}nk}\left( 1-p+\varepsilon \right)^{k(n-k)}. 
    \end{align*}
   The result follows by choosing $\tau$ sufficiently small.

\end{proof}
\section{Proof of main theorem}\label{Proof of main theorem}
Before completing the proof of the main result, we briefly recall the inversion of randomness technique introduced by Tikhomirov \cite{Tikhomirov}.
Roughly speaking, this method allows one to convert the structural information about vectors into probabilistic estimates.
In particular, Tikhomirov \cite{Tikhomirov} considered the following threshold function.
\begin{mydef}\label{threshold function}
    For $p \in (0,1/2]$, $L \ge 1$, and $x \in \mathbb{R}^{n}$, let $b_{1},\dots,b_{n}$ be independent Bernoulli($p$) random variables and we define 
    \begin{align*}
        \mathcal{T}_{p}(x,L):=\sup\left\{ t \in (0,1): \mathcal{L}(\sum_{i=1}^{n}b_{i}x_{i},t) > Lt \right\}.
    \end{align*}
\end{mydef}

Observe that this function essentially captures the Littlewood-Offord problem.
We establish the following property, which shows that the threshold function associated with the incompressible part is sufficiently small.
Since the proof only involves a minor modification of the argument of Tikhomirov \cite{Tikhomirov}, we include here a brief proof of the following proposition for the reader's convenience, while postponing the proof of the key lemma to the appendix.

\begin{mypropo}\label{Incomp Levy}
    Let $\delta,\rho,\varepsilon \in (0,1)$ and $k \ge 1$, there exist $n_{\ref{Incomp Levy}}$, $L_{\ref{Incomp Levy}}$ and $c_{\ref{Incomp Levy}}$ depending on $\delta,\rho,\varepsilon,p$ such that for all $n\ge n_{\ref{Incomp Levy}}$ and $1 \le k \le c_{\ref{Incomp Levy}}\sqrt{\log{n}}$, with the probability at least $1-(1-p)^{4kn}$, for all $x \in \mathrm{Incomp}_{n}(\delta,\rho) \cap \ker{B}$:
    \begin{align*}
        \mathcal{T}_{p}(x,L_{\ref{Incomp Levy}}) \le \left( 1-p+\varepsilon \right)^{n}.
    \end{align*}
\end{mypropo}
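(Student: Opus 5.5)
We follow Tikhomirov's inversion of randomness scheme. Throughout, $B$ is the $(n-k)\times n$ Bernoulli$(p)$ matrix, so $\ker B$ has dimension at least $k$.

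\textbf{Step 1: dyadic decomposition by the threshold.} Suppose some $x\in\mathrm{Incomp}_n(\delta,\rho)\cap\ker B$ has $t:=\mathcal{T}_p(x,L)>(1-p+\varepsilon)^n$. Since $\mathcal L(\sum b_ix_i,t)\ge (1-p)^n$ trivially, a standard bound gives $t\ge (1-p)^n/L$ up to constants. This leaves only $O(n)$ dyadic levels $t\in[2^{-s-1},2^{-s}]$ between $(1-p)^n$ and $(1-p+\varepsilon)^n$. Also, for incompressible $x$, the Rogozin/Esseen bound gives $\mathcal L(\sum b_ix_i,r)\lesssim r+n^{-1/2}$. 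Hence for $L$ large the threshold is at most $n^{-1/2}$ or so, and only levels $t\le c n^{-1/2}$ matter. We fix one level $t$ and take a union bound over at most $O(n)$ levels, which is harmless against $(1-p)^{4kn}$.

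\textbf{Step 2: discretization.} We rescale $x$ by a factor $\sim \sqrt n/t$ and round it to an integer vector $y$ in a box $[-N,N]^n$ with $N\sim \sqrt n/t$, so that $\|By-Bx\sqrt n/t\|_2\lesssim\sqrt n$. Random rounding, as in Lemma \ref{Sparse net}, preserves the threshold level up to constants. The event then becomes $\|By\|_2\lesssim \sqrt n$ for some integer $y$ with $\mathcal L(\sum b_iy_i,\sqrt n)\approx Lt\cdot(\text{scale})$, i.e.\ with small ball probability about $q:=Lt$ at unit scale. For fixed $y$, tensorization (Lemma \ref{Tenzorization}) over the $n-k$ rows gives
\begin{align*}
\mathbb{P}\left(\|By\|_2\lesssim\sqrt n\right)\le (Cq)^{n-k}.
\end{align*}

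\textbf{Step 3: counting via inversion of randomness (the main obstacle).} We need the number of such integer vectors $y$ to be at most $(e^{-M}/q)^{n}N^{n}\cdot$(small). More precisely, the fraction of vectors in the box $[-N,N]^n$ with $\mathcal L\ge q$ is at most $e^{-Mn}q^{n}$-ish, for $M$ as large as we like provided $L$ is large. This is Tikhomirov's key counting theorem, whose proof is postponed to the appendix. It proceeds by partitioning coordinates into blocks, averaging over random integer vectors to show that the Fourier/concentration functional is typically close to its expected value, and applying a multi-scale argument. The difficulty is that we must beat not $e^{-cn}$ but a loss of $(C)^{n}$ from rounding and the $k$ lost rows, and then still retain $(1-p)^{4kn}$. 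The count times the probability is at most
\begin{align*}
N^n e^{-Mn}q^n (Cq)^{n-k}\lesssim e^{-Mn}C^n q^{-k}N^{n}q^{n}.
\end{align*}
Since $Nq\lesssim \sqrt n\,L$ after normalization, and $q^{-k}\le (1-p)^{-kn}$, the total is at most $e^{-(M-C)n}(1-p)^{-kn}$. We need this to be at most $(1-p)^{4kn}$, i.e.\ $M\gtrsim k\log\frac1{1-p}$. Tikhomirov's counting provides $M$ only at the cost of $n$ being large in terms of $M$: the block structure needs about $e^{CM^2}\le n$ or similar. This is exactly what forces $k\le c\sqrt{\log n}$, and this is the step where care is needed, tracking explicitly how $n_{\ref{Incomp Levy}}$ depends on $M$ in the appendix lemma.

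\textbf{Step 4: conclusion.} Summing over the $O(n)$ dyadic levels and absorbing the polynomial factors gives failure probability at most $(1-p)^{4kn}$. On the complementary event every incompressible kernel vector has $\mathcal T_p(x,L)\le(1-p+\varepsilon)^n$.
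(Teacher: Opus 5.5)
Your skeleton matches the paper's proof: $O(n)$ dyadic levels of $\mathcal{T}_p(x,L)$, random rounding to integer vectors, the inversion-of-randomness count (Corollary \ref{Corollary of inversion}) with gain $e^{-Mkn}$, tensorization over the $n-k$ rows of $B$, and the restriction $k\le c\sqrt{\log n}$, which arises because the loss from the $k$ missing rows forces an effective exponent of order $k$. But the quantitative core, Steps 2--3, fails as written.

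You rescale by $\sqrt n/t$ and count in the box $[-N,N]^n$ with $N\sim\sqrt n/t$. That box has about $(\sqrt n/t)^n$ lattice points. You then credit the counting theorem with a surviving fraction $e^{-Mn}q^n$. Inversion of randomness gives no such factor $q^n$. It only says that a fraction $e^{-Mkn}$ of an admissible set with per-coordinate scale $N$ has $\mathcal{L}(\sum b_iy_i,\sqrt n)\ge L/N$. Used with your box, count times probability is about $e^{-Mkn}(C\sqrt n/t)^n(CLt)^{n-k}\approx e^{-Mkn}(CL\sqrt n)^n t^{-k}$. The factor $n^{n/2}$ cannot be absorbed by any fixed $M$. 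Your own last display likewise contains $(Nq)^n\approx(\sqrt n L)^n$, which is then silently dropped.

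The correct normalization, as in the paper, uses per-coordinate scale $N=\lceil r^{-1}\rceil$ (not $\sqrt n/r$) and $y=N\sqrt n\,x$. This $y$ lies in a Euclidean ball of radius $N\sqrt n$, which contains only $(CN)^n$ lattice points (Lemma \ref{Integer size in ball}). It can be placed in an $(N,n,K,\delta_0)$-admissible product set $\mathcal{A}_r$ with $|\mathcal{A}_r|\le (KN)^n$, while $\mathcal{L}(\sum b_iy_i,\sqrt n)\asymp L/N$. The net then has at most $e^{-Mkn}(KN)^n$ points. Each satisfies $\|By\|_2\le n$ with probability at most $(CL/N)^{n-k}$ by Lemma \ref{Tenzorization}. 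The union bound gives $e^{-Mkn}(CKL)^nN^k$ with $N^k\le 2^k(1-p+\varepsilon_0)^{-kn}$, which a large $M$ absorbs. The point is the cancellation $N\cdot (L/N)=L$ rather than $\sqrt n\,L$.

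Three smaller corrections.
\begin{enumerate}
\item Random rounding yields $\|B(y-N\sqrt n\,x)\|_2\le n$, i.e.\ an error of order $\sqrt n$ per row, not $\lesssim\sqrt n$ overall. The event to tensorize is $\|By\|_2\le n$, which still gives $(CL/N)^{n-k}$.
\item Since the union bound carries a factor $(CKL)^n$, $L$ must be fixed independently of $M$, exactly as in Corollary \ref{Corollary of inversion}. Your phrase that $M$ can be taken large ``provided $L$ is large'' would destroy the final absorption if $L$ had to grow with $M$. In the appendix, taking $\gamma$ of order $1/k$ is what brings the $M$-dependent constant $L_{\ref{One dimensional 1}}$ back down, and it is also the source of $k^2\lesssim\log n$.
\item The levels to exclude are $t\in((1-p+\varepsilon)^n,\,cn^{-1/2}]$, not those between $(1-p)^n$ and $(1-p+\varepsilon)^n$. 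The $O(n)$ count is unaffected.
\end{enumerate}
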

\begin{proof}
Let $\varepsilon_0\in(0,\varepsilon)$ be sufficiently small such that $(1-p+\varepsilon_{0})^{n}\sqrt{n} \le (1-p+\varepsilon)^{n}$ and set $q: = 1-p + \varepsilon_0$.

It is enough to prove that, with probability at least $1-(1-p)^{4kn}$, there is no vector
\begin{align*}
    x\in \mathrm{Incomp}_n(\delta,\rho)\cap \ker B
\end{align*}
such that
\begin{align*}
\mathcal{T}_p(x,L)>q^n,
\end{align*}
where $L$ is a sufficiently large constant dependent only on
$\delta,\rho,\varepsilon,p$. Indeed, since $q<1-p+\varepsilon$, this immediately implies
\begin{align*}
\mathcal{T}_p(x,L)\le (1-p+\varepsilon)^n
\end{align*}
for all sufficiently large $n$.

We decompose the possible values of $\mathcal{T}_{p}(x,L)$. Let
\begin{align*}
\mathcal R:=\{2^j q^n: j\ge 0,\ 2^j q^n<1\}.
\end{align*}
Then $|\mathcal R|\le Cn$, where $C=C(p,\varepsilon_0)$. Hence it suffices to prove the following estimate: for every $r\in \mathcal R$,
\begin{align}\label{One scale}
\mathbb P\left(
\exists x\in \mathrm{Incomp}_n(\delta,\rho)\cap \ker B:
r<\mathcal{T}_p(x,L)\le 2r
\right)
\le \exp(-Mkn),
\end{align}
where $M>0$ will be chosen sufficiently large depending only on $p,\delta,\rho$.

We now prove \eqref{One scale}. Fix $r\in\mathcal R$. Let
\begin{align*}
N=N(r):=\left\lceil r^{-1}\right\rceil .
\end{align*}
By the standard spread lemma for incompressible vectors in \cite{RV_adv}, there exist constants
$\delta_0=\delta_0(\delta,\rho)>0$ and $c_0,C_0>0$, depending only on
$\delta,\rho$, such that for every
$x\in\mathrm{Incomp}_n(\delta,\rho)$ there is a subset
$I=I(x)\subset[n]$ with $|I|\ge \delta_0 n$ and
\begin{align*}
\frac{c_0}{\sqrt n}\le |x_i|\le \frac{C_0}{\sqrt n},
\qquad i\in I.
\end{align*}
After a suitable permutation of the coordinates, we shall assume throughout the sequel that
\begin{align*}
I\supset [\lfloor \delta_0 n\rfloor].
\end{align*}

We use the usual random rounding to obtain that for every
$ \{ x \in\mathrm{Incomp}_n(\delta,\rho)\cap \ker B: \mathcal{T}_{p}(x,L) \in [r,2r]\} $, there exists an $(N,n,K,\delta_0)$-admissible set
\begin{align*}
\mathcal A_{r}=A_1\times\cdots\times A_n\subset \mathbb Z^n,
\end{align*}
where $K=K(\delta,\rho)$ such that the following hold.

For all $x \in\{ x \in\mathrm{Incomp}_n(\delta,\rho)\cap \ker B: \mathcal{T}_{p}(x,L) \in [r,2r]\}$, there exists $y \in \mathcal{A}_{r}$ such that
\begin{align*}
 C_{1}LN^{-1} \ge  \mathcal{L}\left(\sum_{i=1}^n b_i y_i,\sqrt n\right)
\ge c_1 L N^{-1},
\end{align*}
where $c_1=c_1(\delta,\rho)>0$. Moreover, $\|B(y-N\sqrt{n}x)\|_{2} \le n$. 

Let $\mathcal{N}_{r} \subset \mathcal{A}_{r}$ be the above net, applying Corollary \ref{Corollary of inversion}, for all $1 \le k \le c_{\ref{Corollary of inversion}}(p,M)\sqrt{\log{n}}$, we have 
\begin{align*}
    |\mathcal{N}_{r}| \le e^{-Mkn}|\mathcal{A}_{r}| \le e^{-Mkn}(KN)^{n}.
\end{align*}
Thus, we have 
\begin{align*}
\mathbb P\left(
\exists x\in \mathrm{Incomp}_n(\delta,\rho)\cap\ker B:
r<\mathcal{T}_p(x,L)\le 2r
\right)
\le \mathbb P\left( \exists y \in \mathcal{N}_{r}: \|By\|_{2} \le n  \right) \le e^{-Q(M,k)n}
\end{align*}
where $Q(M,k)=Mk-C(\delta,\rho)$.
Let $M$ be large enough, we have 
\begin{align*}
    \mathbb P\left(
\exists x\in \mathrm{Incomp}_n(\delta,\rho)\cap\ker B:
r<\mathcal{T}_p(x,L)\le 2r
\right)
\le (1-p)^{5kn}.
\end{align*}

Finally, for all $1 \le k \le c\sqrt{\log{n}}$ and $n \ge n_{0}$, summing over all $r\in\mathcal R$, we get
\begin{align*}
\mathbb P\left(
\exists x\in \mathrm{Incomp}_n(\delta,\rho)\cap\ker B:
\mathcal{T}_p(x,L)>q^n
\right)
\le Cn(1-p)^{5kn}.
\end{align*}
For all sufficiently large $n$, the right-hand side is bounded by
\begin{align*}
(1-p)^{4kn}.
\end{align*}
Therefore, with probability at least $1-(1-p)^{4kn}$, every
\begin{align*}
x\in\mathrm{Incomp}_n(\delta,\rho)\cap\ker B
\end{align*}
satisfies
\begin{align*}
\mathcal{T}_p(x,L)\le q^n\le (1-p+\varepsilon)^n.
\end{align*}
Taking $L_{\ref{Incomp Levy}}=L$ and choosing
\begin{align*}
c_{\ref{Incomp Levy}}\le c_{\ref{Corollary of inversion}}(p,M,\delta,\rho,\varepsilon)
\end{align*}
small enough, the restriction $k\le c_{\ref{Incomp Levy}}\sqrt{\log n}$ is exactly the range in which
Corollary \ref{Corollary of inversion} applies. This completes the proof.
\end{proof}

We have now completed all the necessary preparations and are ready to prove our main result.

\begin{proof}[\textsf{Proof of the Theorem \ref{Theorem A}}]
    
    Recalling the discussion in Subsection \ref{Proof sketch}, let $B$ be an $(n-k) \times n$ Bernoulli($p$) random matrix. We observe that if $\mathrm{corank}(A) \ge k$, then there exist $k$ column vectors that lie in the linear span of the remaining $n-k$ column vectors. Taking into account the number of ways to choose $k$ columns out of $n$, we obtain the following estimate
    \begin{align*}
        \mathbb{P}\left( \text{corank}A \ge k  \right) \le \binom{n}{k}\mathbb{P}\left( \forall v \in \ker{B}:\left\langle A_{i},v \right\rangle=0 \text{ for all } i \in [k]  \right).
    \end{align*}
    
    Since $\dim(\ker B) \ge k$, we can find a collection of $k$ orthonormal unit vectors in $\ker B$. According to Proposition \ref{Compressible subspace}, if $B$ lies in the event $\mathcal{E}_{\text{comp}}^{c}$, then among these $k$ orthonormal unit vectors there must be at least one incompressible unit vector. Consequently, we further obtain
    \begin{align*}
        & \mathbb{P}\left( \forall v \in \ker{B}:\left\langle A_{i},v \right\rangle=0 \text{ for all } i \in [k]  \right) \\
        & \le\mathbb{P}\left(  v(B) \in \ker{B} \cap \mathrm{Incomp}\left( \tau_{\ref{Compressible subspace}}^{2},\tau_{\ref{Compressible subspace}}^{4}\right):\left\langle A_{i},v(B) \right\rangle=0 \text{ for all } i \in [k] \right) +\left( 1-p+\varepsilon \right)^{kn},
    \end{align*}
    where $\tau_{\ref{Compressible subspace}}:=\tau_{\ref{Compressible subspace}}(p,\varepsilon)$ and $v(B) \in \mathbb{R}^{n}$ is a random vectors depending only on $B$. 

    Finally, combining Proposition \ref{Incomp Levy} with the independence of the vectors $A_{1},\dots,A_{k}$, we have
    \begin{align*}
        \mathbb{P}\left( \text{corank}A \ge k \right) \le \binom{n}{k}\left( L_{\ref{Incomp Levy}}(1-p+\varepsilon)^{n} \right)^{k}+ 2\binom{n}{k} \left(1-p+\varepsilon  \right)^{kn} \le \left( 1-p+2\varepsilon \right)^{kn},
    \end{align*}
    where $L_{\ref{Incomp Levy}}:=L_{\ref{Incomp Levy}}(\tau_{\ref{Compressible subspace}}^{2},\tau_{\ref{Compressible subspace}}^{4},p,\varepsilon)$ and $n$ is larger than a constant depending only on $p$ and $\varepsilon$. We have now completed the final proof.

\end{proof}


\textbf{Acknowledgment:} This work was supported by the National Key R\&D Program of China (No.2024YFA1013501), the National Natural Science Foundation of China  (No. 12571162),   Shandong Provincial Natural Science Foundation (No. ZR2024MA082), and the Youth Student Fundamental study Funds of Shandong University  (No. SDU-QM-B202407).

\printbibliography

\appendix
\section{Inversion of randomness}
In the appendix, we briefly explain how ``Inversion of Randomness" argument in Tikhomirov \cite{Tikhomirov} can be used to improve the upper bound of the probability to match that in Proposition \ref{Incomp Levy}. 

We first introduce the basic framework of Tikhomirov’s approach. Let $N, n \ge 1$ be some integers, and let $\delta \in (0,1]$ and $K \ge 1$ be some real numbers. 
We say that a subset $\mathcal{A} \subset \mathbb{Z}^n$ is $(N,n,K,\delta)$-admissible if

\begin{itemize}
    \item $\mathcal{A} = A_1 \times A_2 \times \cdots \times A_n$, where every $A_i$ $(i=1,2,\ldots,n)$ is an origin-symmetric subset of $\mathbb{Z}$;
    
    \item $A_i$ is an integer interval of cardinality at least $2N+1$ for every $i > \delta n$;
    
    \item $A_i$ is a union of two integer intervals of total cardinality at least $2N$ and 
    $A_i \cap [-N,N] = \varnothing$ for all $i \le \delta n$;
    
    \item $|A_1| \cdot |A_2| \cdots |A_n| \le (KN)^n$;
    
    \item $\max A_i < nN$ for all $1 \le i \le n$.
\end{itemize}
Let $\mathcal{A} = A_1 \times A_2 \times \cdots \times A_n \subset \mathbb{Z}^n$ 
be an $(N,n,K,\delta)$-admissible set, and let $f(t)$ be any real-valued function on $\mathbb{Z}$. 
Fix any $p \in (0,1)$, and assume that $X_1, X_2, \ldots, X_n$ are independent 
integer random variables, where each $X_i$ is uniform in $A_i$. 
For every $\ell \le n$, we define a random function $f_{\mathcal{A},p,\ell}$ by

\begin{align*}
f_{\mathcal{A},p,\ell}(t) 
:= \mathbb{E}_b\, f\!\left(t + \sum_{j=1}^{\ell} b_j X_j \right)
= \sum_{(v_j)_{j=1}^{\ell} \in \{0,1\}^{\ell}}
p^{\sum v_j} (1-p)^{\ell - \sum v_j}
f\!\left(t + v_1 X_1 + \cdots + v_\ell X_\ell \right),
\end{align*}
$t \in \mathbb{Z}$, where $\mathbb{E}_b$ denotes the expectation with respect to 
the randomness of the vector $b = (b_1, \ldots, b_n)$ with independent 
Bernoulli$(p)$ components. Now, we give the main result in this appendix.

\begin{mypropo}\label{Main result in appendix}
For any $\delta \in (0,1]$, $p \in (0,1/2]$, $\varepsilon \in (0,p)$, 
$K,M \ge 1$ there are 
$n_{\ref{Main result in appendix}} = n_{\ref{Main result in appendix}}(\delta,\varepsilon,p,K,M) \ge 1$ depending on 
$\delta,\varepsilon,p,K,M$, $L_{\ref{Main result in appendix}} = L_{\ref{Main result in appendix}}(\delta,\varepsilon,p,K) > 0$ depending only on 
$\delta,\varepsilon,p,K$ (and not on $M$) and $c_{\ref{Main result in appendix}}:=c_{\ref{Main result in appendix}}(p,M,\delta,\varepsilon)$ depending only on $p,M,\delta,\varepsilon$ with the following property.

Take $n \ge n_{\ref{Main result in appendix}}$, $1\le k \le c_{\ref{Main result in appendix}}\sqrt{\log{n}}$, $1 \le N \le (1-p+\varepsilon)^{-n}$, 
let $\mathcal{A}$ be an $(N,n,K,\delta)$-admissible set and 
$f(t)$ be a non-negative function in $\ell_1(\mathbb{Z})$ 
with $\|f\|_1 = 1$ and such that $\log_2 f$ is $n^{-1/2}$-Lipschitz. 
Then, with $f_{\mathcal{A},p,n}$ defined above, we have
\begin{align*}
    \mathbb{P}\left( 
\|f_{\mathcal{A},p,n}\|_\infty 
> L_{\ref{Main result in appendix}} (N\sqrt{n})^{-1}
\right)
\le \exp(-Mkn).
\end{align*}
\end{mypropo}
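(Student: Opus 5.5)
The plan is to follow Tikhomirov's inversion-of-randomness scheme and only track how the exceptional probability depends on the number of steps. We view $f_{\mathcal{A},p,n}$ as the endpoint of the random process $f_{\mathcal{A},p,\ell}$, $\ell=0,\dots,n$. Each step is an averaging:
\begin{align*}
f_{\mathcal{A},p,\ell}(t)=(1-p)f_{\mathcal{A},p,\ell-1}(t)+p\,f_{\mathcal{A},p,\ell-1}(t+X_\ell).
\end{align*}
So the $\ell_1$ norm is preserved and $\|f_{\mathcal{A},p,\ell}\|_\infty$ is non-increasing. Moreover, $\log_2 f_{\mathcal{A},p,\ell}$ stays $n^{-1/2}$-Lipschitz, since the Lipschitz bound on $\log_2$ is preserved under convex combinations of translates.

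The argument has two phases.

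\textbf{Phase 1: first $\delta n$ coordinates.} Here the $X_i$ are uniform on two-sided sets avoiding $[-N,N]$ and of size at least $2N$. Using the Lipschitz regularity of $\log_2 f$ on scale $\sqrt n$, we compare $\mathbb{E}_{X_\ell}\|f_{\mathcal{A},p,\ell}\|_2^2$ with $\|f_{\mathcal{A},p,\ell-1}\|_2^2$. The cross term $\sum_t f(t)f(t+X_\ell)$ averages to at most about $\|f\|_1^2\cdot \frac{C}{N}$ plus a contribution from near-diagonal shifts, which are excluded. This gives
\begin{align*}
\mathbb{E}\|f_{\ell}\|_2^2 \le (1-2p(1-p)+\eta)\|f_{\ell-1}\|_2^2 + \frac{C}{N\sqrt n}.
\end{align*}
Iterating over $\ell\le\delta n$ drives $\|f\|_2^2$ down to order $(N\sqrt n)^{-1}$, which is where the constraint $N\le(1-p+\varepsilon)^{-n}$ enters. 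The Lipschitz property then converts the $\ell_2$ bound into an $\ell_\infty$ bound.

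\textbf{Phase 2: upgrading to probability $e^{-Mkn}$.} Markov's inequality alone only gives failure probability like $e^{-cn}$. To reach $e^{-Mkn}$, we use Tikhomirov's tensorization. We split $[n]$ into $m$ blocks of length $n/m$ and note that a failure of the final bound forces many blocks to fail to produce their expected decrement, conditionally on the past. Each conditional failure has probability at most some small $\theta$, so a union bound over patterns of failed blocks gives a bound of roughly
\begin{align*}
\binom{m}{m/2}\theta^{m/2}.
\end{align*}
For this to beat $e^{-Mkn}$, the per-step failure probability itself must be $e^{-\Omega(Mk)}$ per coordinate. We obtain this from the second, interval-type coordinates ($i>\delta n$), where $X_i$ is uniform on an interval of length at least $2N+1$. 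For a fixed function with $\|g\|_\infty$ large, a uniform shift decreases the sup with probability $1-O(1/N')$ over a finer anti-concentration event. We need to repeat this over about $Mk$ nested scales, and each scale costs a factor of $n^{O(k)}$ in the union bound over the discretized level sets of $f$, which is controlled through the Lipschitz condition.

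\textbf{Main obstacle.} The hardest step is this bookkeeping in Phase 2: the union bound over $\exp(O(k^2\log n))$-many configurations must be beaten by the gain $e^{-Mkn}$ against the $\delta n$ averaging steps. The restriction $k\le c\sqrt{\log n}$ should be exactly what makes $k^2\le c^2\log n$ absorbable, provided $n_{\ref{Main result in appendix}}$ is taken large in terms of $M$. I would first try choosing the block number $m\approx Mk$ and checking that the entropy cost is $e^{O(k^2)\,n/\log n}$. The constant $L_{\ref{Main result in appendix}}$ comes only from Phase 1 and is therefore independent of $M$, as the statement requires.
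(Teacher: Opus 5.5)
There is a genuine gap, and it affects both phases.

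\textbf{Phase~1 does not deliver what you claim.} Write $f=f_{\mathcal{A},p,\ell-1}$. Then
$\|f_{\mathcal{A},p,\ell}\|_2^2=(p^2+(1-p)^2)\|f\|_2^2+2p(1-p)\langle f,f(\cdot+X_\ell)\rangle$.
The only general bound on the averaged cross term is $\mathbb{E}_{X_\ell}\langle f,f(\cdot+X_\ell)\rangle\le \|f\|_1^2/|A_\ell|$, which is of order $1/N$, not $1/(N\sqrt n)$.

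Your recursion therefore stalls at $\|f\|_2^2\approx 1/N$. Once $f$ is spread on a scale much larger than $|X_\ell|=O(N)$, the cross term is comparable to $\|f\|_2^2$, and you only get random-walk decay, not geometric decay.

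Even granting geometric decay, $\delta n$ steps with ratio $p^2+(1-p)^2$ cannot produce the factor $1/N$ when $N$ is as large as $(1-p+\varepsilon)^{-n}$ and $\delta$ is small. So this is not where that constraint enters.

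Finally, $n^{-1/2}$-Lipschitz regularity of $\log_2 f$ only gives $\|f\|_\infty\le \sqrt2\, n^{-1/4}\|f\|_2$. Hence $\|f\|_2^2\lesssim (N\sqrt n)^{-1}$ would only yield $\|f\|_\infty\lesssim (Nn)^{-1/2}$, which misses the target $(N\sqrt n)^{-1}$ by a factor $\sqrt N$. A width-$\sqrt n$ bump of height $(Nn)^{-1/2}$ on top of a flat profile shows this loss is real.

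\textbf{Phase~2 is missing the key mechanism.} It is meant to improve the exceptional probability to $e^{-Mkn}$ while keeping an $M$-independent $L$. That improvement is the whole content of the proposition, and the mechanism you sketch does not supply it.

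The paper, following Tikhomirov, proceeds the other way round. The first stage, Proposition~\ref{One dimensional 1}, already has exceptional probability $e^{-Mkn}$, but at the price $L_{\ref{One dimensional 1}}=Ce^{cMk}$. It gives the geometric decay $\|f_{\mathcal{A},p,l}\|_\infty\le\max\{L_{\ref{One dimensional 1}}/(N\sqrt n),(1-p+\varepsilon)^l\|f\|_\infty\}$ over most of the coordinates. This stage, together with $\|f\|_\infty\lesssim n^{-1/2}$, is what uses $N\le(1-p+\varepsilon)^{-n}$.

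The $M$-independence of $L_{\ref{Main result in appendix}}$ is then recovered by applying Proposition~\ref{one dimensional 2} on about $Mk$ further disjoint blocks of $\gamma n$ coordinates. Each block contracts the sup norm by the factor $p/\sqrt2+1-p$, with exceptional probability $e^{-Mkn}$. Fitting about $Mk$ blocks into the available coordinates forces $\gamma=\beta/k$.

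That proposition needs $\log_2 g$ to be $\eta$-Lipschitz with $\eta=C\exp(-Mk/\gamma)=C\exp(-Mk^2/\beta)$. This must follow from the inherited $n^{-1/2}$-Lipschitz property you correctly noted, i.e.\ $Mk^2/\beta\lesssim\log n$. That is the actual source of $k\le c\sqrt{\log n}$.

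Your bookkeeping cannot produce this restriction. A union bound over $e^{O(k^2\log n)}$ configurations is negligible against $e^{-Mkn}$ for every $k\lesssim n/\log n$. Moreover, with $m\approx Mk$ blocks, the bound $\binom{m}{m/2}\theta^{m/2}\le e^{-Mkn}$ needs $\theta\le e^{-\Omega(n)}$ per block, not $e^{-\Omega(Mk)}$ per coordinate. So the nested-scales step is not an argument that can be checked.
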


According to the proof of Corollary 4.3 in Tikhomirov \cite{Tikhomirov}, the indicator function can be approximated by a function $f$ that satisfies the assumptions of the above proposition. Consequently, $f_{\mathcal{A},p,n}$ can be viewed as a L\'{e}vy concentration function. In particular, we have the following corollary
\begin{mycoro}\label{Corollary of inversion}
    For any $\delta \in (0,1]$, $p \in (0,1/2]$, $\varepsilon \in (0,p)$, 
$K,M \ge 1$ there are 
$n_{\ref{Corollary of inversion}} = n_{\ref{Corollary of inversion}}(\delta,\varepsilon,p,K,M) \ge 1$ depending on 
$\delta,\varepsilon,p,K,M$, $L_{\ref{Corollary of inversion}} = L_{\ref{Corollary of inversion}}(\delta,\varepsilon,p,K) > 0$ depending only on 
$\delta,\varepsilon,p,K$ (and not on $M$) and $c_{\ref{Corollary of inversion}}:=c_{\ref{Corollary of inversion}}(p,M,\delta,\varepsilon)$ depending only on $p,M,\delta,\varepsilon$ with the following property.

Take $n \ge n_{\ref{Corollary of inversion}}$, $1\le k \le c_{\ref{Corollary of inversion}}\sqrt{\log{n}}$, $1 \le N \le (1-p+\varepsilon)^{-n}$, 
let $\mathcal{A}$ be an $(N,n,K,\delta)$-admissible set. Further, assume that $b_{1},\dots,b_{n}$ are i.i.d. Bernoulli($p$) random variables. Then
\begin{align*}
    \left| \left\{  x \in \mathcal{A}: \mathcal{L}\left( \sum_{i=1}^{n}{b_{i}x_{i}},\sqrt{n} \right)  \ge L_{\ref{Corollary of inversion}}N^{-1} \right\} \right|
\le \exp(-Mkn)|\mathcal{A}|.
\end{align*}
\end{mycoro}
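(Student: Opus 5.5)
The plan is to derive Corollary \ref{Corollary of inversion} from Proposition \ref{Main result in appendix} by a Markov-type averaging argument, following the proof of Corollary 4.3 in \cite{Tikhomirov}. The idea is to choose one fixed test function $f$ for which $\|f_{\mathcal{A},p,n}\|_\infty$ dominates the L\'evy concentration function $\mathcal{L}(\sum b_i x_i,\sqrt n)$ up to a constant factor. Then the set of bad $x$ is contained in the event controlled by the proposition. Since $X=(X_1,\dots,X_n)$ is uniform on $\mathcal{A}$, the probability in Proposition \ref{Main result in appendix} is exactly the proportion of $x\in\mathcal{A}$ with the stated property. An upper bound $\exp(-Mkn)$ on that probability is therefore an upper bound $\exp(-Mkn)|\mathcal{A}|$ on the cardinality.

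\textbf{Construction of $f$.} I will take a two-sided exponential
\[
g(t)=2^{-|t|/\sqrt n},\qquad f:=g/\|g\|_1 .
\]
Here $\log_2 f$ is $n^{-1/2}$-Lipschitz, since $|t|/\sqrt n$ is, and $\|g\|_1=\sum_{t\in\mathbb{Z}}2^{-|t|/\sqrt n}\asymp \sqrt n$; more precisely $\|g\|_1\le C\sqrt n$ with $C$ absolute. On the interval $|t|\le \sqrt n$ we have $g(t)\ge 1/2$, so $f(t)\ge (2C\sqrt n)^{-1}\mathbf 1_{[-\sqrt n,\sqrt n]}(t)$. Because $x\in\mathbb{Z}^n$, the sum $S=\sum b_ix_i$ is integer valued. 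For any $y\in\mathbb{R}$ pick an integer $t$ with $|t+y|\le 1/2$ (this is a rounding of $-y$). Then
\[
f_{\mathcal{A},p,n}(t)=\mathbb{E}_b f(t+S)\ge (2C\sqrt n)^{-1}\,\mathbb{P}\bigl(|S+t|\le \sqrt n\bigr)\ge (2C\sqrt n)^{-1}\,\mathbb{P}\bigl(|S-y|\le \sqrt n-1/2\bigr).
\]

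\textbf{Adjusting the radius and constants.} To handle the small loss of $1/2$ in the radius, I will instead use $g(t)=2^{-|t|/(2\sqrt n)}$. This function is still $n^{-1/2}$-Lipschitz after taking $\log_2$, it is $\ge 1/2$ on $|t|\le 2\sqrt n$, and it satisfies $\|g\|_1\le C\sqrt n$. With this choice the displayed inequality becomes
\[
\|f_{\mathcal{A},p,n}\|_\infty\ge (2C\sqrt n)^{-1}\mathcal{L}(S,\sqrt n).
\]
Hence $\mathcal{L}(S,\sqrt n)\ge L N^{-1}$ forces $\|f_{\mathcal{A},p,n}\|_\infty\ge L(2CN\sqrt n)^{-1}$. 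I then set $L_{\ref{Corollary of inversion}}:=4C\,L_{\ref{Main result in appendix}}$, which depends only on $\delta,\varepsilon,p,K$. With this choice the event is contained in $\{\|f_{\mathcal{A},p,n}\|_\infty>L_{\ref{Main result in appendix}}(N\sqrt n)^{-1}\}$. The remaining parameters are inherited directly: $n_{\ref{Corollary of inversion}}=n_{\ref{Main result in appendix}}$ and $c_{\ref{Corollary of inversion}}=c_{\ref{Main result in appendix}}$.

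The only real content is the deferred Proposition \ref{Main result in appendix}, which is the main obstacle in the appendix as a whole. The corollary itself is routine once that proposition is available. The one point needing care is checking that a single fixed $f$ works uniformly for all $x$ and all shifts $y$. This holds because the supremum over $t\in\mathbb{Z}$ in $\|f_{\mathcal{A},p,n}\|_\infty$ absorbs the supremum over $y$, at the cost only of integer rounding, which the doubled radius in $g$ accommodates.
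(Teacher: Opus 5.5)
Your proposal is correct and is essentially the paper's argument. You use the same normalized two-sided exponential test function, bound it below by a multiple of $n^{-1/2}$ times the indicator of a slightly enlarged window, and pass from probability under the uniform measure on $\mathcal{A}$ to cardinality. The only cosmetic difference is that you halve the decay rate to absorb the integer rounding, whereas the paper keeps $2^{-|t|/\sqrt n}$ and uses the window $[-\sqrt n-1,\sqrt n+1]$ directly (the original choice would in fact also have sufficed).
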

\begin{proof}
    Let $n\ge  n_{\ref{Main result in appendix}}$. Following the argument of the proof of Corollary 4.3 in \cite{Tikhomirov},  define the function $f \in \ell_{1}(\mathbb{Z})$ as
    \begin{align*}
        f(t):= \frac{1}{m_{0}}2^{-|t|/\sqrt{n}}, \  t \in \mathbb{Z}
    \end{align*}
    where $m_{0}:= \sum_{t \in \mathbb{Z}}{2^{-|t|/\sqrt{n}}}$. Note that $\| f\|_{1}=1$, $\log_{2}f(t)$ is $n^{-1/2}$-Lipschitz and $f(t) \ge \frac{c}{\sqrt{n}}\textbf{1}_{[-\sqrt{n}-1,\sqrt{n}+1]}(t)$.

    Applying Proposition \ref{Main result in appendix} and the properties of $f(t)$, we have 
    \begin{align*}
        \left| \left\{  x \in \mathcal{A}: \sup_{t \in \mathbb{Z}}\mathbb{E}\textbf{1}_{[-\sqrt{n}-1,\sqrt{n}+1]}\left(t+ \sum_{i=1}^{n}{b_{i}x_{i}} \right)  \ge CL_{\ref{Main result in appendix}}N^{-1} \right\} \right|
\le \exp{(-Mkn)}|\mathcal{A}|.
    \end{align*}
    Furthermore, we have
    \begin{align*}
        \left| \left\{  x \in \mathcal{A}: \mathcal{L}\left( \sum_{i=1}^{n}{b_{i}x_{i}},\sqrt{n} \right)  \ge L_{\ref{Corollary of inversion}}N^{-1} \right\} \right|
\le \exp(-Mkn)|\mathcal{A}|.
    \end{align*}
\end{proof}
Our argument follows Section 4 in Tikhomirov \cite{Tikhomirov} almost verbatim. We make suitable modifications so that the upper bound of the probability resulting improves to $e^{-Mkn}$. We now briefly outline this procedure, beginning with a preliminary result.

\begin{mypropo}\label{One dimensional 1}
    For any $M>0$, $p \in (0,1/2]$, $\delta \in (0,1)$ and $\varepsilon \in (0,p)$. there exist $n_{\ref{One dimensional 1}}=n_{\ref{One dimensional 1}}(p,\delta,\varepsilon)$ and $L_{\ref{One dimensional 1}}= C_{\ref{One dimensional 1}}e^{c_{\ref{One dimensional 1}}Mk}$, where $C_{\ref{One dimensional 1}}$ and $c_{\ref{One dimensional 1}}$ depending only on $p,\varepsilon$ and $\delta$. Let $f \in \ell_{1}(\mathbb{Z})$ such that $\| f\|_{1}=1$, $n \ge n_{\ref{One dimensional 1}}$, $n/2 \le l \le n$ and let $\mathcal{A}$ be an $(N,n,K,\delta)$-admissible set for $N \le 2^{n}$ and $K>0$. Then
    \begin{align*}
        \mathbb{P}\left( \Vert f_{\mathcal{A},p,l}\Vert_{\infty} \ge \max \{ L_{\ref{One dimensional 1}}/N\sqrt{n} , \left( 1-p+\varepsilon \right)^{l}\Vert f\Vert_{\infty} \} \right) \le e^{-Mkn}.
    \end{align*}
\end{mypropo}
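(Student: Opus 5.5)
This is an adaptation of Tikhomirov's Proposition 4.2 in \cite{Tikhomirov}, the ``one-dimensional'' step of inversion of randomness. We follow his argument and track where the failure probability is produced. The only change is that the target tail becomes $e^{-Mkn}$ instead of $e^{-Mn}$. We pay for this by letting the constant be $L=C e^{cMk}$ rather than an absolute constant.

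\textbf{Step 1: reduction to a block decomposition.} Split $[l]$ into consecutive blocks $J_1,\dots,J_s$ of size $m\approx \beta n$, where $\beta=\beta(p,\varepsilon,\delta)$ is small; the number of blocks $s$ is then a constant. Conditionally on the $X_j$ outside a block $J$, write
\[
f_{\mathcal A,p,l}(t)=\mathbb E_{b_J}\, g\Big(t+\sum_{j\in J} b_jX_j\Big),
\]
where $g$ is the averaged function coming from the other coordinates, and $\|g\|_1=1$. Averaging over a single Bernoulli coordinate gives $\|\mathbb E_b\, g(\cdot+bX)\|_\infty\le \|g\|_\infty$. If this averaging contracts by a factor $(1-p+\varepsilon/2)$ for a typical $X$, then iterating over the $l\ge n/2$ coordinates produces the term $(1-p+\varepsilon)^l\|f\|_\infty$. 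The contraction can only fail when $g$ is already nearly ``flat'' at scale $N$, and in that regime we aim for the bound $L/(N\sqrt n)$.

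\textbf{Step 2: the per-coordinate estimate.} Fix a coordinate $i$ and the function $g$, and let $X_i$ be uniform on $A_i$. There are two cases. If $i\le\delta n$, then $A_i$ avoids $[-N,N]$, so $X_i$ is large. The shifts $t\mapsto t+X_i$ then move the peak of $g$ to places where $g$ is small, except for a set of $X_i$ of relative measure at most $C\|g\|_1/(N\|g\|_\infty)$. Hence
\[
\mathbb P_{X_i}\Big(\|\mathbb E_{b_i} g(\cdot+b_iX_i)\|_\infty>(1-p+\tfrac{\varepsilon}{4})\|g\|_\infty\Big)\le \frac{C(\varepsilon,p)}{N\|g\|_\infty}.
\]
If $i>\delta n$, $A_i$ is an interval of length at least $2N$ and we obtain the same kind of statement. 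Whenever $\|g\|_\infty\ge L/(N\sqrt n)$, this failure probability is at most $C\sqrt n/L$, which is small. Unlike Tikhomirov, we will not use the tail coordinates $i>\delta n$ at all.

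\textbf{Step 3: tensorization with the $k$-dependent tail.} The events ``coordinate $i$ fails to contract'' are not independent, since $g$ depends on earlier coordinates. We therefore process the coordinates sequentially and use a martingale/Chernoff bound, as in Lemma~\ref{Tenzorization}. Each coordinate fails with conditional probability at most $\eta:=C/L'$, where $L'$ is the effective constant after normalizing by $\sqrt n$. It follows that the number of failing coordinates among $\lfloor\delta n\rfloor$ exceeds $\varepsilon' n$ with probability at most
\[
\binom{n}{\varepsilon' n}\eta^{\varepsilon' n}\le \Big(\frac{e\eta}{\varepsilon'}\Big)^{\varepsilon' n}.
\]
To make this at most $e^{-Mkn}$ we need $\log(\varepsilon'/(e\eta))\ge Mk/\varepsilon'$, that is, $L\ge C e^{cMk}$ with $c=1/\varepsilon'$. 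On the complementary event, every non-failing coordinate contracts by $1-p+\varepsilon/4$ until the level $L/(N\sqrt n)$ is reached. Hence $\|f_{\mathcal A,p,l}\|_\infty\le\max\{L/(N\sqrt n),(1-p+\varepsilon/4)^{l-\varepsilon' n}\|f\|_\infty\}$, and $(1-p+\varepsilon/4)^{l-\varepsilon' n}\le(1-p+\varepsilon)^l$ once $\varepsilon'$ is small relative to $\varepsilon$ (recall $l\ge n/2$).

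\textbf{Main obstacle.} The delicate point is Step 2 for the first $\delta n$ coordinates, where $g$ is a general $\ell_1$ function. Obtaining contraction by the sharp factor $1-p+\varepsilon$, rather than merely some constant below $1$, requires that a random large shift $X_i$ land the $b_i=1$ term away from the maximum of $g$ except on a small set. For this I would follow Tikhomirov's level-set argument: partition $\mathbb Z$ by the level sets of $g$ at height $(\varepsilon/4)\|g\|_\infty$. The set where $g$ is large has cardinality at most $4/(\varepsilon\|g\|_\infty)$, which is small compared with $N$ exactly when $\|g\|_\infty\gg 1/N$. The remaining difficulty is the bookkeeping that turns $1/N$ into $1/(N\sqrt n)$. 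There I would use the $\sqrt n$ gain from the anticoncentration of $\sum b_jX_j$ over the tail coordinates, absorbing it into the prefactor.
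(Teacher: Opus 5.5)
Your accounting of where $L=Ce^{cMk}$ comes from (a Chernoff count $(e\eta/\varepsilon')^{\varepsilon' n}\le e^{-Mkn}$) matches what the paper intends: it simply reruns Tikhomirov's proof of his Proposition 4.5 with $M$ replaced by $Mk$. But there is a genuine gap, because the per-coordinate estimate you substitute for Tikhomirov's argument is false.

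\textbf{Step 2.} Non-contraction at a shift $x$ requires some $t$ with $g(t)$ close to $\Vert g\Vert_\infty$ and $g(t+x)\gtrsim(\varepsilon/p)\Vert g\Vert_\infty$. So the bad shifts form a difference set $S_2-S_1$ of two superlevel sets, whose size can be of order $\Vert g\Vert_\infty^{-2}$, not $\Vert g\Vert_\infty^{-1}$. For example, let $m=\lceil\sqrt{nN}\rceil+1$, $S=\{0,1,\dots,m-1\}\cup\{0,m,2m,\dots,(m-1)m\}$ and $g=|S|^{-1}\mathbf{1}_S$. Then $S-S\supset[-nN,nN]\supset A_i$. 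For every value $x=s_1-s_2$ of $X_i$ (with $s_1,s_2\in S$) one gets $(1-p)g(s_2)+pg(s_2+x)=\Vert g\Vert_\infty$. So contraction fails with probability one. Yet $\Vert g\Vert_\infty\approx(4nN)^{-1/2}\ge L/(N\sqrt n)$ once $N\ge 4L^2$, and your bound predicts a failure probability of about $2C\sqrt{n/N}$. The small conditional failure probability $\eta$ on which Step 3 rests is therefore not available.

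\textbf{The $\sqrt{n}$ factor and Step 3.} Even granting Step 2, at the level $\Vert g\Vert_\infty\approx L/(N\sqrt n)$ it gives only $\eta\approx C\sqrt n/L$. This is not small for $L=Ce^{cMk}$ with bounded $k$, and it is not a bookkeeping issue. Take $g=W^{-1}\mathbf{1}_{[0,W)}$ with $2N\le W\le N\sqrt n/L$, and the admissible choice $A_i=[-2N,-N-1]\cup[N+1,2N]$ for $i\le\delta n$. Since $|X_i|\le 2N\le W$, we get $\Vert\mathbb{E}_{b_i}g(\cdot+b_iX_i)\Vert_\infty=\Vert g\Vert_\infty$ for every value of $X_i$.

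In such examples the sup norm decays only diffusively, roughly like $(N\sqrt j)^{-1}$ after $j$ averagings, never by a fixed factor $1-p+\varepsilon/4$ per coordinate. So the range between about $1/N$ and $L/(N\sqrt n)$ cannot be certified coordinate by coordinate. The $n^{-1/2}$ must come from the joint anticoncentration of $\sum_i b_iX_i$ over order $n$ coordinates. One example is a Kolmogorov--Rogozin bound at scale $N$, which holds for every $X\in\mathcal A$ because $|X_i|>N$ for $i\le\delta n$. That must then be converted into a pointwise bound with failure probability $e^{-Mkn}$. This conversion is the real content of the proposition. Your proposal leaves it to a closing remark, and that remark contradicts Step 2's decision not to use the tail coordinates.

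Finally, Step 3 counts failures only among the first $\lfloor\delta n\rfloor$ coordinates. Even on the good event it therefore yields the factor $(1-p+\varepsilon/4)^{\lfloor\delta n\rfloor-\varepsilon' n}$, not $(1-p+\varepsilon/4)^{l-\varepsilon' n}$. For small $\delta$ and $l\ge n/2$, this is not $\le(1-p+\varepsilon)^{l}$.
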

\begin{proof}
    By the proof of Proposition 4.5 in \cite{Tikhomirov}, we complete the proof of this theorem.
\end{proof}
\begin{mypropo}\label{one dimensional 2}
For any $p \in (0,1/2]$, $\gamma \in (0,1)$, $\widetilde R > 1$, 
$L_0 \ge 16\widetilde R$ and $M \ge 1$ there are $n_{\ref{one dimensional 2}} = n_{\ref{one dimensional 2}}(p,L_0,\widetilde R,M) > 0$, $\eta_{\ref{one dimensional 2}} = C_{\ref{one dimensional 2}}\exp{ \left( -\frac{Mk}{\gamma}  \right)}$, where $C_{\ref{one dimensional 2}}$ is an universal constant, $c_{\ref{one dimensional 2}}=c_{\ref{one dimensional 2}}(p,M)$ and $1 \le k \le c_{\ref{one dimensional 2}}\gamma \log{n}$ with the following property. Let $L_0 \ge L \ge 16\widetilde R$, let $n \ge n_{\ref{one dimensional 2}}$, $N \le 2^n$, let $g \in \ell_1(\mathbb{Z})$ be a non-negative function satisfying
\begin{itemize}
    \item $\|g\|_1 = 1$;
    
    \item $\log_2 g$ is $\eta_{\ref{one dimensional 2}}$-Lipschitz;
    
    \item $\displaystyle \sum_{t \in I} g(t) \le \frac{\widetilde R}{\sqrt{n}}$
    for any integer interval $I$ of cardinality $N$;
    
    \item $\displaystyle \|g\|_\infty \le \frac{L}{N\sqrt{n}}$.
\end{itemize}

For each $i \le \lfloor \gamma n \rfloor$, let $X_i$ be a random variable 
uniform on some disjoint union of integer intervals of cardinality at least $N$ each; 
and assume that $X_1,\dots,X_{\lfloor \gamma n \rfloor}$ are independent. 
Define a random function $\widetilde g \in \ell_1(\mathbb{Z})$ as
\begin{align*}
\widetilde g(t) 
& := \mathbb{E}_b\, g\!\left(t + \sum_{i=1}^{\lfloor \gamma n \rfloor} b_i X_i \right)\\
& = \sum_{(v_i)_{i=1}^{\lfloor \gamma n \rfloor} \in \{0,1\}^{\lfloor \gamma n \rfloor}}
p^{\sum_i v_i} (1-p)^{\lfloor \gamma n \rfloor - \sum_i v_i}
g\!\left(t + v_1 X_1 + \cdots + v_{\lfloor \gamma n \rfloor} X_{\lfloor \gamma n \rfloor}\right),
\end{align*}
where $b = (b_1,\dots,b_n)$ is the vector of independent Bernoulli$(p)$ components. Then
\begin{align*}
\mathbb{P}\!\left\{
\|\widetilde g\|_\infty 
> \frac{(p/\sqrt{2}+1-p)L}{N\sqrt{n}}
\right\}
\le \exp(-Mkn).
\end{align*}
\end{mypropo}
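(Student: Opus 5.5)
This is Tikhomirov's Proposition 4.6 (the ``one-step'' smoothing estimate), with the failure probability upgraded from $e^{-Mn}$ to $e^{-Mkn}$. The price is a Lipschitz parameter $\eta$ that shrinks exponentially in $Mk/\gamma$. I would follow Tikhomirov's argument and track how the large-deviation exponent depends on $\eta$.

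\textbf{Step 1: reduce to a finite family of maxima.} Split $[\lfloor\gamma n\rfloor]$ into consecutive blocks and write $\widetilde g$ as an iterated average. By the $\eta$-Lipschitz property of $\log_2 g$, the value $\widetilde g(t)$ changes by a factor at most $2^{\eta}$ when $t$ moves by one. Hence $\|\widetilde g\|_\infty$ is controlled, up to a factor $1+o(1)$, by $\widetilde g$ on a grid of spacing about $1/\eta$ inside the relevant range. By the last admissibility-type condition this range has length at most $n\cdot nN$. The number of grid points is therefore $\mathrm{poly}(n)\,N\eta$, and with $N\le 2^n$ this is $e^{O(n)}$. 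So it suffices to show that, for each fixed $t$,
\begin{align*}
\mathbb{P}\left\{ \widetilde g(t) > \frac{(p/\sqrt2+1-p)L}{N\sqrt n}\right\} \le \exp(-2Mkn).
\end{align*}
Then a union bound costs only $e^{Cn}$, which is absorbed once $M$ is large.

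\textbf{Step 2: bound for fixed $t$ via a martingale / tensorization.} Write
\begin{align*}
\widetilde g(t)=\mathbb{E}_b\,\bigl[(1-p)\,h(t+S')+p\,h(t+S'+X_i)\bigr],
\end{align*}
where $S'$ collects the other summands and $h$ is the corresponding partially averaged function. Conditionally, $\mathbb{E}_{X_i}h(s+X_i)$ averages $h$ over a union of intervals of length at least $N$. By the interval-mass condition $\sum_{I}g\le \widetilde R/\sqrt n$ (which is preserved under averaging), this average is at most about $\widetilde R/(N\sqrt n)\le L/(16N\sqrt n)$. It is small compared to $\|h\|_\infty\le L/(N\sqrt n)$ except on an event of small probability. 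Following Tikhomirov, I would show that each coordinate $X_i$ independently has probability at least some $c(p)>0$ of contracting the maximum by a factor $1-p+p/\sqrt2$ or better. The $\sqrt2$ slack comes from comparing the $\ell_2$ and $\ell_\infty$ norms of the shifted sum. Tensorizing over the $\gamma n$ coordinates with Lemma \ref{Tenzorization} (first part) then gives a failure probability of the form
\begin{align*}
\left(\frac{e}{\theta}\right)^{\theta\gamma n}\beta^{(1-\theta)\gamma n},
\end{align*}
where $\beta$ is the per-coordinate failure probability.

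\textbf{Step 3 (main obstacle): getting the exponent $Mk$ rather than $M$.} The per-coordinate failure probability $\beta$ must be of size about $e^{-Mk/\gamma}$ so that $\beta^{\gamma n}\le e^{-2Mkn}$. In Tikhomirov's argument $\beta$ is governed by the Lipschitz constant. A bad event requires $h$ to be nearly constant at its maximum level over the whole random interval of length $N$, and the probability of that is bounded by a power of $\eta$. This is why we take $\eta=C\exp(-Mk/\gamma)$. The constraint on this choice is that the errors in Step 1 and in the averaging must stay negligible. Those errors are roughly $\eta\cdot\mathrm{poly}(n)$ factors, together with the requirement that $1/\eta$ be much smaller than the scales $N$ and $\sqrt n$ involved. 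This forces $e^{Mk/\gamma}\le n^{c}$, that is, $k\le c(p,M)\gamma\log n$, which is exactly the stated range. I expect the delicate point to be verifying that the Lipschitz-to-concentration step really yields $\beta\lesssim\eta^{c}$ uniformly in the choice of unions of intervals. I would try to do this by discretizing each $X_i$'s support into blocks of length $1/\eta$ and applying a Paley--Zygmund-type bound within each block.
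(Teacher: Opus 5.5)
Your overall plan, to follow Tikhomirov and track how the Lipschitz parameter must depend on the target exponent, matches the paper's. The paper carries it out by quoting Tikhomirov's Proposition 4.10 with $M$ replaced by $Mk$, so that his $\eta$ becomes $Ce^{-Mk/\gamma}$. It then only checks the two places where $n$ must dominate: $\eta>n^{-1/2}$ and $c\,\eta\gamma n>L\sqrt n$. You instead reconstruct the mechanism yourself, and that reconstruction has a genuine gap in Steps 2--3.

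\textbf{Independence fails.} Whether coordinate $i$ helps depends on $X_i$ and on the partially averaged function $h$, and $h$ depends on all the other $X_j$. So these events are not independent, and the first part of Lemma \ref{Tenzorization} does not apply. Conditional per-coordinate bounds do not multiply in general: the set $\{x\in\mathbb{Z}_q^m:\sum_i x_i\equiv 0 \bmod q\}$ has every one-dimensional fibre of relative density $1/q$, yet its total density is also $1/q$.

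\textbf{The per-coordinate failure probability is not $\lesssim\eta^{c}$.} What blocks a gain at $t$ is $t+X_i$ landing in the super-level set $\{h>L/(\sqrt2\,N\sqrt n)\}$. The interval-mass bound is preserved under averaging, but it only bounds the relative density of this set in a window of length $N$ by about $\sqrt2\,\widetilde R/L$. Neither that bound nor the Lipschitz condition forces the density lower, since the high points can sit in clusters of width $\gg 1/\eta$. The event you name instead, $h$ nearly maximal over a whole interval of length $N$, is excluded deterministically by $\sum_{t\in I}h(t)\le \widetilde R/\sqrt n\le L/(16\sqrt n)$. So it cannot be where $\eta$ enters. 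With $\beta$ a constant, which is all Step 2 claims, tensorizing over $\gamma n$ coordinates gives only $e^{-c\gamma n}$, not $e^{-Mkn}$. The gain in $Mk/\gamma$ must come from the collective argument in Tikhomirov's proof, and your per-coordinate picture does not capture it.

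\textbf{Further problems.} The parameter accounting in Step 3 runs the wrong way for one constraint. Making errors of size $\eta\cdot\mathrm{poly}(n)$ negligible forces $\eta\le n^{-c}$, i.e.\ $k\ge c'\gamma\log n/M$, which is the opposite of the stated range. The binding constraints are lower bounds on $\eta$, namely the paper's two conditions above, and these are what give $k\le c(p,M)\gamma\log n$.

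Step 1 also uses $\max_i|X_i|<nN$, which is not a hypothesis of this proposition; here the $X_i$ are only uniform on unions of intervals of length at least $N$. So your $e^{O(n)}$ union bound over $t$ is not justified as stated. In addition, absorbing its cost ``once $M$ is large'' is not available, because $M\ge 1$ is given.

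To repair the proof, either invoke Tikhomirov's Proposition 4.10 as a black box with its explicit dependence of $\eta$ on the exponent, as the paper does, or reproduce its actual mechanism rather than a per-coordinate tensorization.
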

\begin{proof}
    In fact, the proof of this theorem is essentially identical to that of Proposition 4.10 in \cite{Tikhomirov}. It therefore suffices to verify that the variables involved satisfy the required assumptions. Following the proof of Proposition 4.10 in \cite{Tikhomirov}, after suitable adjustments, we see that it is enough to require that $n$ satisfies the following two conditions:
    The first is the Lipschitz condition, which requires $\eta_{\ref{one dimensional 2}} > n^{-1/2}$.
    
    The second is the requirement that $c \eta_{\ref{one dimensional 2}} \gamma n > L_{\ref{one dimensional 2}} \sqrt{n}$, where $c$ is constant depending only on $p$. Thus, we only need $k \le c\gamma \log n$.
\end{proof}

\begin{proof}[\textsf{Proof of the Proposition \ref{Main result in appendix}}]
    As the last step, we only need to note that $\gamma = \beta(\delta,p,\varepsilon,M)/k$ to decay the $L_{\ref{One dimensional 1}}$. Thus, $k^{2} \le c\log n$.
\end{proof}

\end{document}